\documentclass[12pt]{article}
\usepackage[english]{babel}

\usepackage[letterpaper,top=2cm,bottom=2cm,left=3cm,right=3cm,marginparwidth=1.75cm]{geometry}
\usepackage{amsmath,amssymb,amsthm,amscd,esint}
\usepackage{tikz-cd}

\usepackage{graphicx}
\usepackage[colorlinks=true, allcolors=blue]{hyperref}
\newtheorem{theorem}{Theorem}[section]
\newtheorem{lemma}{Lemma}[section]
\newtheorem{corollary}[theorem]{Corollary}
\newtheorem{proposition}[theorem]{Proposition}
\newtheorem{definition}[theorem]{Definition}

\newtheorem{example}{Example}
\newtheorem{remark}[theorem]{Remark}

\newcommand{\R}{\mathbb{R}}
\newcommand{\C}{\mathbb{C}}
\newcommand{\ii}{\sqrt{-1}}
\newcommand{\reg}{\mathrm{reg}}

\newcommand{\bbm}{\begin{bmatrix}}
\newcommand{\ebm}{\end{bmatrix}}
\newcommand{\lam}{\lambda}

\DeclareMathOperator{\codim}{codim}

\title{Generalized Gauduchon metrics on compact complex spaces
}
\author{
Yong Chen\thanks{Email:  cybwv988@163.com }\\
College of Field Engineering, Army Engineering University of PLA,\\ NanJing 210007,
People’s Republic of China\\
Xi Zhang\thanks{ Email:  mathzx@njust.edu.cn}\\
School of Mathematics and Statistics, \\Nanjing University of Science and Technology, Nanjing \\210094, P. R. China}
\begin{document}
\maketitle

\begin{abstract}
Let $X$ be an irreducible reduced compact complex space of dimension $n$, and let $\omega$ be a Hermitian metric on $X$. For each $1\le k\le n-1$, we prove that there exist a  constant $c_k$ and a smooth positive function $\rho$ on $X_{\reg}$ such that $\ii\partial\bar\partial(\rho\omega^k)\wedge\omega^{n-k-1}=c_k\rho\omega^n$. The function $\rho$ is bounded above and bounded away from zero. The constant $c_k$ is unique, and $\rho$ is unique up to a positive multiple among bounded smooth positive solutions. We also prove that the full sign set $\{-1,0,1\}$ of the generalized Gauduchon constants $c_k$  on a compact complex manifold is preserved by finite sequences of blow-ups with smooth centers.
\end{abstract}

\section{Introduction}

Let $X$ be a $n$-dimensional compact complex manifold and $g$ be a Riemmann metric which satisfies $g(JX,JY)=g(X,Y)$ (compatible with the complex structure $J$). This metric $g$ is called a Hermitian meric. We can  identify $g$ with a real positive definite $(1,1)$-form defined by
 \[ \omega(X,Y):=g(JX,Y)  \]
 for any $X,Y\in \Gamma(TX)$.  Gauduchon \cite{Gau} proved an interesting result that, for any Hermitian metric $\omega$ on a compact complex $n$-dimensional manifold $X$, there exists a Hermitian metric $\omega_{G}$ in the conformal class of $\omega$, i.e., $\omega_{G}=\rho^{\frac{1}{n-1}}\omega$ where $\rho$ is a positive function on $X$, such that
\begin{equation}\label{GauE}
    \sqrt{-1}\partial\bar{\partial}\omega^{n-1}_{G}=\sqrt{-1}\partial\bar{\partial}(\rho\omega^{n-1})=0,
\end{equation}
 $\omega_{G}$ is unique up to a positive multiple. Thus, a compact complex manifold always admits a Gauduchon metric. This fact is important in complex geometry since one can use such a metric to define the degree of holomorphic vector bundles on complex manifolds, and then make sense of the stability of holomorphic vector bundles on a non-K\"ahler  manifold, as a consequence, Li and Yau generalized the Donaldson-Uhlenbeck–Yau theorem to non-K\"ahler setting (see \cite{LY}). 
 
 For $1\le k\le n-2$, one may ask whether a complex manifold admits a Hermitian metric $\omega$ satisfying
\begin{equation}  \label{GG0} \sqrt{-1}\partial\bar{\partial}\omega^{k}=0.
\end{equation}
When $k = n - 2$, a Hemitian metric $\omega$ satisfying \eqref{GG0}
 is called an Astheno-K\"ahler metric
 which was introduced by 
 Jost and Yau \cite{YY}  to study Hermitian harmonic maps.  When $k = 1$, the metric $\omega$ in \eqref{GG0} is called pluriclosed, or SKT (see \cite{FPS, ST}). However, there exist complex manifolds which don't admit an Astheno-K\"ahler metric or a pluriclosed metric. Astheno-K\"ahler metrics and pluriclosed metrics impose some constraints on the underlying manifolds. 
  Since equation \eqref{GG0} does not necessarily have a solution in complex manifolds, Fu, Wang and Wang \cite{FWW} considered the
weaker equation :
  \begin{equation}\label{k-Gau}
\sqrt{-1}\partial\bar{\partial}\omega^{k}\wedge\omega^{n-k-1}=0
  \end{equation}
  for $1\le k \le n-1$,
such a Hermitian metric is called  a $k$-Gauduchon metric. Note that a $(n-1)$-Gauduchon metric is the classical Gauduchon metric. To study the existence of $k$-Gauduchon metrics, Fu, Wang and Wang \cite[Theorem 2, Corollary 4]{FWW} proved the following theorem

\begin{theorem}\label{FWWMT}
Let $(M, \omega)$ be a compact Hermitian manifold. For $1\le k \le n-1$. The following equation
\begin{equation}\label{FWWG}    
\sqrt{-1}\partial\bar{\partial}(e^{v}\omega^{k})\wedge\omega^{n-k-1}=c_{k}e^{v}\omega^{n}.
\end{equation}
admits a  constant $c_k$ and a smooth function satisfying \eqref{FWWG}. Futhermore, the constant $c_k$ is unique and $v$  is unique up to a constant. In particular, when $k=n-1$, we have $c_{n-1}=0$. If $\omega$ is a K\"ahler metric, then $c_{k}=0$ and $v$ is a constant for $1\le k \le n-1$.
\end{theorem}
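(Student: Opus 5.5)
Substituting $\rho=e^{v}$, the equation \eqref{FWWG} becomes the eigenvalue problem
\[
L\rho:=\frac{\ii\partial\bar\partial(\rho\,\omega^k)\wedge\omega^{n-k-1}}{\omega^n}=c_k\,\rho ,
\]
with $\rho>0$ required. I would analyze $L$ directly as a linear differential operator. Expanding $\partial\bar\partial(\rho\omega^k)$ gives
\[
L\rho=\frac{k}{n}\Delta_\omega\rho+\langle \text{first order terms}\rangle(\rho)+V\rho ,
\]
where $\Delta_\omega\rho = n\,\ii\partial\bar\partial\rho\wedge\omega^{n-1}/\omega^n$ (up to normalization, this uses $\omega^k\wedge\omega^{n-k-1}=\omega^{n-1}$). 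The first order terms are linear in $d\rho$ with coefficients built from $\partial\omega^k,\bar\partial\omega^k$. The potential is $V=\ii\partial\bar\partial\omega^k\wedge\omega^{n-k-1}/\omega^n$. Thus $L$ is a second order, uniformly elliptic operator with smooth real coefficients on a compact manifold, with no boundary and no zero order sign condition.

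The existence of $(c_k,\rho)$ then follows from the Krein--Rutman theorem, i.e.\ the principal eigenvalue theory. I would carry it out in three steps.

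\begin{enumerate}
\item Choose $\lambda\gg0$ with $\lambda>\sup V$. By the maximum principle and Schauder theory, $L-\lambda$ is invertible as a map $C^{2,\alpha}\to C^{\alpha}$.
\item The operator $T=(\lambda-L)^{-1}$ is compact on $C^0(M)$, by compact embedding. It is also strongly positive: if $f\ge0$ and $f\not\equiv0$, then $Tf>0$ everywhere, by the strong maximum principle applied to $u=Tf$, which satisfies $(L-\lambda)u=-f\le0$.
\item Krein--Rutman then gives a simple eigenvalue $r(T)>0$ with a strictly positive eigenfunction $\rho$. Setting $c_k=\lambda-1/r(T)$ yields $L\rho=c_k\rho$. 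Elliptic regularity gives $\rho$ smooth, and we take $v=\log\rho$.
\end{enumerate}

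For uniqueness I would use a comparison argument. Suppose $L\rho_1=c\rho_1$ and $L\rho_2=c'\rho_2$ with $\rho_i>0$. Pass to the formal adjoint. A direct computation gives
\[
\int_M (L\rho)\,\phi\,\omega^n=\int_M \rho\,\ii\partial\bar\partial\phi\wedge\omega^k\wedge\omega^{n-k-1}
\]
after integrating by parts twice. Hence $L^*\phi=\ii\partial\bar\partial\phi\wedge\omega^{n-1}/\omega^n$, which is a Laplacian with no zero order term. Its kernel consists of the constants: by the maximum principle, $L^*\phi=0$ forces $\phi$ constant. It follows that $\int (L-c)\rho\cdot\phi$ pairs against positive eigenfunctions of the adjoint. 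More simply, Krein--Rutman applied to $T^*$ gives a positive adjoint eigenfunction $\psi$. Integrating $L\rho_2=c'\rho_2$ against $\psi$ yields $(c'-c)\int\rho_2\psi=0$, so $c'=c$. Simplicity of the principal eigenvalue then gives $\rho_2=t\rho_1$, i.e.\ $v$ is unique up to a constant.

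The special cases come out as follows.
\begin{itemize}
\item For $k=n-1$, the adjoint computation applies to $L$ itself in reverse. Integrating $L\rho=c\rho$ against $\phi\equiv1$ gives
\[
\int \ii\partial\bar\partial(\rho\omega^{n-1})=0,
\]
so $c\int\rho\,\omega^n=0$. Since $\rho>0$, this forces $c_{n-1}=0$.
\item If $\omega$ is Kähler, then $d\omega=0$. All first order and zero order terms vanish, and $L=\frac{k}{n}\Delta$. Constants solve $L\rho=0\cdot\rho$, so by uniqueness $c_k=0$ and $v$ is constant.
\end{itemize}

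The main obstacle is verifying the strong positivity of $T$ and the simplicity of the eigenvalue in the presence of the first order terms. I would handle both via Hopf's strong maximum principle for non-divergence operators: the first order terms are harmless there, and the zero order term is made negative by the shift $\lambda$.
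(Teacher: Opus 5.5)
Your argument is correct once one false side claim (below) is deleted, and it takes a different route from the one behind this statement. The paper does not prove Theorem~\ref{FWWMT}; it quotes Fu--Wang--Wang. As summarized in the introduction, their proof runs the continuity method on the quasilinear equation \eqref{quasi} for $v$: a maximum-principle gradient bound, a $C^0$ bound deduced from it, then $W^{2,p}$/Schauder bootstrapping.

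You instead use that the equation is linear in $\rho=e^v$, and you obtain $c_k$ as the principal eigenvalue via Krein--Rutman for the compact, strongly positive resolvent $(\lambda-L)^{-1}$ on $C^0(M)$. This is shorter and needs no gradient estimate. It also gives extra structure for free: $c_k$ is algebraically simple and is the only eigenvalue with a positive eigenfunction. Hence continuity of $c_k$ in $\omega$, which is used for Theorem~\ref{thm2}, follows from perturbation theory of simple eigenvalues. The continuity method, by contrast, produces explicit a priori bounds on $v$ in terms of the metric. On singular spaces the paper has to replace that kind of quantitative control by the Moser iteration of Theorem~\ref{C0}.

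For uniqueness of the constant, your pairing with a positive adjoint eigenvector is the same mechanism as Lemma~\ref{200}, which the paper notes also works on compact manifolds. For uniqueness of the function you invoke Krein--Rutman simplicity. The paper instead uses the ratio identity \eqref{eq:ratio-energy}, which survives cutoff arguments on the noncompact regular locus, where Krein--Rutman is unavailable.

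The false claim is your formula for $L^*$. Integrating by parts twice gives
\[
\int_M (L\rho)\,\phi\,\omega^n=\int_M\rho\,\omega^k\wedge\ii\partial\bar\partial\bigl(\phi\,\omega^{n-k-1}\bigr),
\]
not $\int_M\rho\,\ii\partial\bar\partial\phi\wedge\omega^{n-1}$. You dropped the terms where $\partial\bar\partial$ lands on the non-closed form $\omega^{n-k-1}$. Thus $L^*=\frac1n\mathcal L_{n-k-1}$ (Lemma~\ref{lem:formal-adjoint}). For $k\le n-2$ this operator has first- and zero-order terms, and its kernel is in general not the constants. Indeed, with your formula, testing against $\phi\equiv1$ would force $c_k=0$ for every $k$ and every metric, which is false.

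Your final argument does not use this formula, but be precise there. Krein--Rutman for $T^*$ on $C^0(M)^*$ only yields a nonzero positive measure $\mu$ with $T^*\mu=r(T)\mu$. That suffices: pairing $T\rho_2=(\lambda-c')^{-1}\rho_2$ with $\mu$ gives $\bigl((\lambda-c')^{-1}-r(T)\bigr)\int_M\rho_2\,d\mu=0$, and $\int_M\rho_2\,d\mu\ge(\min_M\rho_2)\,\mu(M)>0$. Alternatively, run your three steps for $\mathcal L_{n-k-1}$ itself and pair as in Lemma~\ref{200}.

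Finally, the principal part of $L$ is $\frac1n\Delta$, not $\frac kn\Delta$. This is harmless, and your $k=n-1$ and K\"ahler cases are correct as written.
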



 Our convention uses $\sqrt{-1}\partial\bar\partial$, rather than $(\sqrt{-1}/2)\partial\bar\partial$ as in \cite{FWW}. This change does not affect the sign of the constant. Equation \eqref{FWWG} is equivalent to 
\begin{equation}\label{quasi}    \begin{split}        
\Delta v+|\partial v|^2+\langle B_{k,\omega},\mathrm{d} v\rangle+\varphi_{k,\omega}=nc_k,
\end{split}
\end{equation}
where $B_{k,\omega}=\sqrt{-1}\frac{k}{(n-1)(n-1)!}*(\partial\omega^{n-1}-\bar{\partial}\omega^{n-1})$ and $\varphi_{k,\omega}=\frac{n\sqrt{-1}\partial\bar{\partial}\omega^{k}\wedge\omega^{n-k-1}}{\omega^{n}}$. If we set $\rho=e^v$, equation \eqref{quasi} are turned  into a linear elliptic equation for $\rho$.

Fu, Wang, and Wang employed the classical continuity method to solve equation \eqref{quasi}, a second-order quasi-linear PDE.
The key to this approach is establishing a priori estimates for the solution $v$. They first derived a global gradient estimate that depends on the Hermitian metric $\omega$ by applying the maximum principle. Subsequently, a $C^{0}$-estimate follows easily from this gradient bound (see \cite[page 665]{FWW} for details). Finally, applying the interior $W^{2,p}$-estimates ($p>1$) and Schauder estimates for second-order linear elliptic equations yield the desired global $C^{k,\alpha}$-estimates for any $k\ge1$. 

Singular spaces, especially projective varieties are main research subject in algebraic geometry and analytic geometry. Singularities are unavoidable as they occur in various contexts, especially in the minimal model program and moduli theory. There has been increasing attention in recent years on the study of canonical metrics on singular varieties. Eyssidieux, Guedj and Zeriahi \cite{EGZ} constructed singular K\"ahler–Einstein metrics with non-positive curvature on mildly projective varieties by solving a degenerate complex Monge-Amp\`ere equation. The Donaldson–Uhlenbeck–Yau theorem was generalized to normal K\"ahler spaces (see \cite{CGP, CW, CZ, Pan}).  Pan \cite{Pan1} first constructed bounded Gauduchon metrics on
smoothable Hermitian varieties (complex spaces with Hermitian metrics). He later removed the smoothability assumption \cite{Pan}.  An important part of the latter work is a uniform Sobolev inequality for perturbations of a pullback metric on a resolution. It uses the recent Sobolev estimates of Guo, Phong, Song and Sturm \cite{GPSS}.

We consider corresponding problem for generalized Gauduchon metrics on compact complex spaces. The meaning of a Hermitian metric on a complex space is specified in Definition~\ref{Herm}. 

\begin{definition}
Let $X$ be an irreducible reduced compact complex space of  dimension $n$, and let $\omega$ be a Hermitian metric on $X$. Fix $1\le k\le n-1$. If there exists a  positive bounded smooth function defined on $X_{\mathrm{reg}}$ such that
\begin{equation}
\sqrt{-1}\partial\bar{\partial}(\rho\omega^{k})\wedge\omega^{n-k-1}=0
\end{equation}
holds on $X_{\mathrm{reg}}$. Then $\widetilde{\omega}=\rho^{\frac{1}{k}}\omega$ is a Hermitian metric on $X_{\mathrm{reg}}$ satisfying
\begin{equation}
    \sqrt{-1}\partial\bar{\partial}(\widetilde{\omega}^{k})\wedge\widetilde{\omega}^{n-k-1}=0
\end{equation}
Then we call $\widetilde{\omega}$ a bounded $k$-Gauduchon metric in the conformal class of $\omega$.
\end{definition}

The conformal factors constructed below are also bounded away from zero. The corresponding metrics are therefore uniformly equivalent to the background metric on the regular locus. Our first result extends the existence and uniqueness statements in Theorem~\ref{FWWMT} to singular spaces.

\begin{theorem}\label{MT}
Let $X$ be an irreducible reduced compact complex space of dimension $n$, and let $\omega$ be a Hermitian metric on $X$. For each $1\le k\le n-1$, there exist a  constant $c_k$ and a smooth positive function  $\rho\in C^{\infty}(X_{\mathrm{reg}})\cap L^{\infty}(X_{\mathrm{reg}})$ satisfying 
\begin{equation}\label{SGG}
\ii\partial\bar\partial(\rho\omega^k)\wedge\omega^{n-k-1}=c_k\rho\omega^n
\end{equation}
and
\[
0<\inf_{X_{\reg}}\rho\le\sup_{X_{\reg}}\rho<\infty.
\]
Among bounded smooth positive solutions of \eqref{SGG}, the constant $c_k$ is unique and $\rho$ is unique up to multiplication by a positive constant. In particular, the normalization $\inf_{X_{\mathrm{reg}}}\rho=1$ determines $\rho$ uniquely. For $k=n-1$, the constant is $c_{n-1}=0$. If $\omega$ is K\"ahler, then for $1\le k\le n-1$,  $c_k=0$ and $\rho$ is constant.
\end{theorem}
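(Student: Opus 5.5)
Existence will come from a resolution/approximation scheme in the style of Pan \cite{Pan}. Take a resolution of singularities $\pi:\widetilde X\to X$ that is biholomorphic over $X_{\reg}$, and fix a Hermitian metric $\omega_{\widetilde X}$ on $\widetilde X$. Set $\omega_\varepsilon=\pi^*\omega+\varepsilon\omega_{\widetilde X}$, which is a genuine Hermitian metric on the compact manifold $\widetilde X$. Theorem~\ref{FWWMT} then gives constants $c_{k,\varepsilon}$ and positive smooth functions $\rho_\varepsilon=e^{v_\varepsilon}$ with
\[
\ii\partial\bar\partial(\rho_\varepsilon\omega_\varepsilon^k)\wedge\omega_\varepsilon^{n-k-1}=c_{k,\varepsilon}\rho_\varepsilon\omega_\varepsilon^n .
\]
We normalise by $\inf\rho_\varepsilon=1$.

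The core of the proof is a set of estimates uniform in $\varepsilon$.
\begin{itemize}
\item[(a)] A bound $|c_{k,\varepsilon}|\le C$.
\item[(b)] A bound $\sup\rho_\varepsilon\le C$, which together with the normalisation keeps $\rho_\varepsilon$ away from $0$ and $\infty$.
\item[(c)] Local smooth bounds on compact subsets of $X_{\reg}$.
\end{itemize}

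For (a), I would test the equation against a Gauduchon-type adjoint solution. Let $\eta_\varepsilon$ be a positive solution of the formal adjoint equation $L_\varepsilon^*\eta_\varepsilon=0$, where $L_\varepsilon\rho=\ii\partial\bar\partial(\rho\omega_\varepsilon^k)\wedge\omega_\varepsilon^{n-k-1}/\omega_\varepsilon^n$. Integrating gives
\[
c_{k,\varepsilon}\int\rho_\varepsilon\eta_\varepsilon\,\omega_\varepsilon^n=\int L_\varepsilon\rho_\varepsilon\,\eta_\varepsilon\,\omega_\varepsilon^n=0\cdot(\ldots)
\]
plus lower-order torsion contributions. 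Alternatively, at the maximum and minimum points of $\rho_\varepsilon$ one gets $c_{k,\varepsilon}\rho_\varepsilon$ bounded by the zero-order coefficient $\varphi_{k,\omega_\varepsilon}$ plus first-order terms, which vanish there. The coefficients $\varphi_{k,\omega_\varepsilon}$ and $B_{k,\omega_\varepsilon}$ involve torsion of $\pi^*\omega$ that is bounded with respect to $\omega_\varepsilon$. This uses the Hermitian condition of Definition~\ref{Herm}: locally $\omega$ is the restriction of a smooth form on an ambient $\C^N$, so its derivatives are controlled by $\omega_\varepsilon$ itself.

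For (b), the gradient estimate of \cite{FWW} depends on curvature and is not uniform. Instead I would run Moser iteration on the linear equation $L_\varepsilon\rho_\varepsilon=c_{k,\varepsilon}\rho_\varepsilon$, which is of divergence form plus bounded first-order terms. The iteration uses the uniform Sobolev inequality for $\omega_\varepsilon$ of Guo--Phong--Song--Sturm \cite{GPSS} as used by Pan \cite{Pan}. It gives $\sup\rho\le C\|\rho\|_{L^1}$ for supersolutions and a weak Harnack $\|\rho\|_{L^{p}}\le C\inf\rho$ for small $p$, hence a Harnack inequality $\sup\rho_\varepsilon\le C\inf\rho_\varepsilon=C$. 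I expect this step to be the main obstacle. The first-order terms $\langle B,d\rho\rangle$ have to be absorbed using only bounded torsion, and the volume and Sobolev constants must be controlled as $\varepsilon\to0$. Locally (c) then follows from standard Schauder estimates, since $\omega_\varepsilon\to\pi^*\omega$ smoothly on compact subsets of $\pi^{-1}(X_{\reg})$.

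To finish existence, pass to a subsequence along which $c_{k,\varepsilon}\to c_k$ and $\rho_\varepsilon\to\rho$ in $C^\infty_{loc}(X_{\reg})$, with $C^{-1}\le\rho\le C$. The normalisation $\inf\rho=1$ is recovered after rescaling; the bounds persist.

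For uniqueness, suppose $(c,\rho)$ and $(c',\rho')$ are two bounded positive solutions. Use a cutoff argument on $X_{\reg}$: the singular set has codimension at least $1$ and there are cutoffs $\chi_j$ with $\int|d\chi_j|^2\omega^n\to0$ (codimension $\ge1$ in complex terms, i.e. real codimension $2$). This gives an integration-by-parts formula valid for bounded functions. First show $c=c'$ by pairing each equation with a positive adjoint solution obtained as a limit in the same way. Then take $u=\rho/\rho'$. It satisfies a linear elliptic equation without zeroth-order term, $\Delta u+\langle b,du\rangle=0$ with weights. Multiplying by $\chi_j^2 (u-\sup u)$-type test functions shows $du=0$, so $\rho=\lambda\rho'$.

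The special cases are handled separately. For $k=n-1$, the operator is $\rho\mapsto\ii\partial\bar\partial(\rho\omega^{n-1})/\omega^n$, whose integral against $1$ vanishes after the cutoff argument. Since $\rho>0$ this forces $c_{n-1}=0$. If $\omega$ is Kähler, $\rho\equiv1$ solves the equation with $c_k=0$, and uniqueness gives the claim.
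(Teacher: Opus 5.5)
Your architecture is the paper's: resolution, Fu--Wang--Wang on $\widetilde X$, uniform bounds, local limits, and adjoint pairing with cutoffs. But the first concrete step fails as you set it up. With an arbitrary Hermitian $\omega_{\widetilde X}$, the coefficients $\varphi_{k,\omega_\varepsilon}$ and $B_{k,\omega_\varepsilon}$ are built from $d\omega_\varepsilon=d\pi^*\omega+\varepsilon\,d\omega_{\widetilde X}$, not only from the torsion of $\pi^*\omega$. Where $\ker\pi^*\omega$ has dimension at least $2$ (e.g.\ on an exceptional divisor lying over a point, $n\ge3$), one has $\omega_\varepsilon\sim\varepsilon\,\omega_{\widetilde X}$ in those directions. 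So $n\varepsilon\,\ii\partial\bar\partial\omega_{\widetilde X}\wedge\omega_\varepsilon^{n-2}/\omega_\varepsilon^n$ is generically of order $\varepsilon^{-1}$, and $|\varepsilon\,\partial\omega_{\widetilde X}|_{\omega_\varepsilon}$ of order $\varepsilon^{-1/2}$. Then neither your maximum-principle bound on $c_{k,\varepsilon}$ nor any Moser iteration is uniform. The paper instead takes $\theta=\pi^*\omega+Kc_1(\mathcal O(-D),h)$. Then $\theta-\pi^*\omega$ is closed and $d\omega_\varepsilon=(1+\varepsilon)\,d\pi^*\omega$. Only with this choice does the ambient-extension argument give Lemma~\ref{uniform2}: pulled-back tensors have bounded $\omega_\varepsilon$-norm because $\pi^*\omega\le\omega_\varepsilon$. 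Your first idea for (a) is also off: pairing shows a positive solution of $L_\varepsilon^*\eta=0$ exists only if $c_{k,\varepsilon}=0$.

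The step you flag as the main obstacle is where the key idea is missing. Absorbing the drift is not the real difficulty. The paper even sidesteps it with the exact identity of Lemma~\ref{FL}, which converts all first-order terms into the zeroth-order densities $\ii\partial\bar\partial\omega^k\wedge\omega^{n-k-1}$ and $\ii\partial\bar\partial\omega^{n-k-1}\wedge\omega^k$. The difficulty is crossing between negative and positive exponents. The standard weak Harnack uses John--Nirenberg for $\log\rho$, which needs Poincar\'e inequalities on balls uniformly in $\varepsilon$; these are unavailable near $D$. A global Sobolev inequality alone cannot bound $\sup\rho/\inf\rho$, since it carries no quantitative connectedness. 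You never use Pan's uniform \emph{global} Poincar\'e inequality, which is exactly what the paper needs:
\begin{itemize}
\item negative-power Moser with $\inf\rho=1$ gives $\mathrm{vol}\{\rho<2K_2V_\omega\}\ge 1/(2K_2)$;
\item for $0<p<1$ the coefficient $C(p)$ in $\int|\partial\rho^{p/2}|^2\le C(p)\int\rho^p$ tends to $0$ as $p\to0$, so Poincar\'e lets one absorb and bound $\int\rho^{p_0}$;
\item positive-power Moser plus interpolation then bounds $\sup\rho$.
\end{itemize}

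The uniqueness step has a similar gap. Testing $\Delta u+\langle b,du\rangle=0$ with $\chi_j^2(u-\sup u)$ yields only finite energy, not $du=0$. The reason is that $b=B_k+d\log\rho'$ is not a gradient and is only $L^2$ near $X_{\mathrm{sing}}$, and the Chern Laplacian is not even symmetric unless $d\omega^{n-1}=0$. The argument closes only after weighting by the positive adjoint solution. Take $v_k$ the constructed solution ($\inf v_k=1$) and $v_\ell$ a bounded positive solution for $\mathcal L_{n-k-1}=\mathcal L_k^*$. Then $(\mathcal L_k-\lambda)(u^2/v_k)=2v_k|\partial(u/v_k)|^2$. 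Pairing with $\eta_mv_\ell$ and using $\|(\mathcal L_\ell-\lambda)(\eta_mv_\ell)\|_{L^1}\to0$, which rests on the finite-energy lemma for bounded solutions, gives $\int v_kv_\ell|\partial(u/v_k)|^2=0$.

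Your treatment of $k=n-1$, and of the K\"ahler case via $\rho\equiv1$ and uniqueness, is fine.
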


\begin{remark}
    By the uniqueness of $c_k$ for equation \eqref{SGG}, we can associate to each hermitian metric $\omega$ on an irreducible reduced compact complex space a unique constant $c_k(\omega)$
\end{remark}

Now, we  outline the approach to proving Theorem \ref{MT}. Consider a resolution  $\pi: \widetilde{X} \to X$ of the singularities of $X$. Denote by $D$ the exceptional divisor of $\pi$.    The map $\pi$ is an isomorphism away from the singular locus of $X$. We can  construct a family of Hermitian metrics  $\omega_{\epsilon}$ ($\epsilon\in (0,1]$) on $\widetilde{X}$ which converge to $\pi^{*}\omega$ in $C_{\mathrm{loc}}^{\infty}(\widetilde{X}\backslash D)$ as $\epsilon \to 0$. Thanks to Theorem \ref{FWWMT}, for each $\epsilon$, there exists a unique smooth positive function $\rho_\epsilon$ and $c_{k,\epsilon}$ satisfying the equation
\begin{equation}\label{f_k-Gau}
    \sqrt{-1}\partial\bar{\partial}(\rho_{\epsilon} \omega^{k}_{\epsilon})\wedge\omega_{\epsilon}^{n-k-1}=c_{k,\epsilon}\rho_{\epsilon}\omega_{\epsilon}^{n},\;\;\inf\rho_{\epsilon}=1.
\end{equation}
To establish the convergence of $\rho_\epsilon$ as $\epsilon \to 0$, we require a uniform estimate for $\rho_\epsilon$.  

We consider the following equation 
\begin{equation}
\sqrt{-1}\partial\bar{\partial}(\rho \omega^{k})\wedge\omega^{n-k-1}=c_{k}\rho\omega^{n},\;\;\inf\rho=1
\end{equation}
on  Hermitian manifold $(M, \omega)$.
Let $A$ be a positive constant which depends on  Hermitian metric $\omega$ such that the metric $\omega$ satisfies the following inequalities:
\begin{equation}\label{1.5}
    -A\omega^{n}\le \sqrt{-1}\partial\bar{\partial}\omega^{k}\wedge\omega^{n-k-1}\le A\omega^{n}
\end{equation}
\begin{equation}\label{1.7}
    -A\omega^{n}\le \sqrt{-1}\partial\bar{\partial}\omega^{n-k-1}\wedge\omega^{k}\le A\omega^{n}
\end{equation}

In Section \ref{C00}, we establish a  global $L^{\infty}$-estimate for $\rho$. This estimate depends on the parameters $n$, $A$, $C_S$, $C_P$, $V_\omega$, $|c_k|$, and $\beta$. Here, $C_S$ and $C_P$ are the Sobolev and Poincar\'e constants of the Hermitian metric $\omega$, respectively, $V_\omega$ is the volume of $\omega$, and $\beta \in (1, \frac{n}{n-1})$ is a constant.  The proof uses positive and negative powers of $\rho$. A small positive power gives an energy coefficient that tends to zero. This allows us to absorb the Poincar\'e term and complete the estimate. The local $C^{k,\alpha}$-estimates for any $k\ge 1$ come from the interior $W^{2,p}$-estimates ($p>1$) and Schauder estimates for elliptic equations.

 In Section \ref{ext}, we prove that the constants $A_\epsilon$ appearing in inequalities \eqref{1.5} and \eqref{1.7} for the metrics $\omega_\epsilon$ are independent of $\epsilon$. Lemma \ref{uniform} establishes that $|c_{k,\epsilon}|$ is uniformly bounded for all $\epsilon \in (0,1]$. Furthermore, uniform Sobolev inequalities and Poincar\'e inequalities for $\omega_\epsilon$ have recently been established by Pan  \cite[Theorem 3.5]{Pan}.
 Combining these results, we obtain a  global uniform   $L^{\infty}$-estimate for $\rho_\epsilon$. Applying the interior $W^{2,p}$-estimates ($p>1$) and Schauder estimates for elliptic equations, we obtain a local uniform $C^{k,\alpha}$-estimate for any $k\ge 1$. Consequently, we can select a subsequence $\rho_{\epsilon_{i}}$ such that $\rho_{\epsilon_{i}}$ converges smoothly (away from the singular locus of $X$) to a limit function $\rho$ and $c_{k,\epsilon_{i}}$ converges to a  constant $c_k$ as $i \to \infty$. Obviously, $\rho$ and $c_k$ satisfies equation \eqref{SGG} on $X_{\mathrm{reg}}$. At the end of Section \ref{ext}, We give  examples of  three-dimensional non-K\"ahler varieties  that admits a 1-Gauduchon metric.

In Section~\ref{uniq}, we define the linear operator $\mathcal L_{k,\omega}$ motivated by equation \eqref{SGG}. Uniqueness follows from the formal adjoint relation ($\mathcal L_{k,\omega}^*=\mathcal L_{n-k-1,\omega}$) and cutoff estimates near the singular locus. We first prove that a bounded solution of the adjoint equation has finite energy. Cutoff estimates then give uniqueness of the constant and uniqueness of the positive function up to a positive multiple. The same argument gives $c_k(\omega)=c_{n-k-1}(\omega)$ for $1\le k\le n-2$ (see Corollary~\ref{cor:adjoint-symmetry}). 

For a compact  complex manifold $M$, let $c_k(\omega)$ be the unique constant in Theorem~\ref{FWWMT}, and set
\[
C_k(M)=\{\mathrm{sgn}\; c_k(\omega):\omega\text{ is a Hermitian metric on }M\}\subset\{-1,0,1\}.
\]
The constant depends smoothly on $\omega$ by \cite[Proposition 9]{FWW}. 
Hence, if $-1, 1\in C_k(M) $, then $C_k(M)=\{-1,0,1\}$.
Also, $c_k(\omega)=0$ precisely when the conformal class of $\omega$ contains a $k$-Gauduchon metric. Fu, Wang and Wu asked (\cite[page 662]{FWW})  whether $C_k(M)$ is invariant under modifications. Our second result gives one direction when the sign set is full.

\begin{theorem}\label{thm2}
Let $X$ and $Y$ be compact connected complex manifolds of dimension $n$. Suppose that there is a finite sequence of blow-ups with smooth centers
\[
X=X_N\xrightarrow{f_{N-1}}X_{N-1}\longrightarrow\cdots\longrightarrow X_2\xrightarrow{f_1}X_1=Y.
\]
For $1\le k\le n-2$, 
\begin{itemize}
    \item if $-1\in C_k(Y)$, then  $-1\in C_k(X)$.
    \item if $1\in C_k(Y)$, then  $1\in C_k(X)$.
    \item if $C_k(Y)=\{-1,0,1\}$, then $C_k(X)=\{-1,0,1\}$.
\end{itemize}
\end{theorem}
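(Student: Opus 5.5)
By induction on the number of blow-ups it suffices to treat a single blow-up $f\colon X\to Y$ with smooth center $Z\subset Y$ and exceptional divisor $E=f^{-1}(Z)$. The third bullet follows from the first two together with the remark that $c_k$ depends continuously (indeed smoothly) on $\omega$ \cite[Proposition 9]{FWW}. Given $-1,1\in C_k(X)$, the space of Hermitian metrics on $X$ is convex, hence connected, so the intermediate value theorem gives $0\in C_k(X)$. It therefore remains to prove: if $\omega$ is a Hermitian metric on $Y$ with $c_k(\omega)<0$ (resp. $>0$), then $X$ carries a Hermitian metric with $c_k<0$ (resp. $>0$).

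The plan is to regard $(Y,\omega)$ as a (smooth) compact complex space and $f$ as a resolution, and then to rerun the approximation scheme of Section~\ref{ext}. Choose a smooth closed real $(1,1)$-form $\theta$ on $X$ that is positive along the fibres of $E\to Z$, for example the curvature of a suitable metric on $\mathcal O(-E)$. Set
\[
\omega_\epsilon=f^*\omega+\epsilon\theta .
\]
For $0<\epsilon\le\epsilon_0$ this is Hermitian on $X$, and as $\epsilon\to 0$ it converges to $f^*\omega$ in $C^\infty_{\mathrm{loc}}(X\setminus E)$. Let $(\rho_\epsilon,c_{k,\epsilon})$ be the solutions given by Theorem~\ref{FWWMT}, normalized by $\inf\rho_\epsilon=1$. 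By the uniform estimates that the paper establishes for exactly this family, we have uniform bounds on $|c_{k,\epsilon}|$ (Lemma~\ref{uniform}) and on $\|\rho_\epsilon\|_{L^\infty}$, together with local $C^{l,\alpha}$ bounds away from $E$. Hence along a subsequence $\rho_{\epsilon_i}\to\rho$ smoothly on $X\setminus E\cong Y\setminus Z$ and $c_{k,\epsilon_i}\to c_\infty$. The limit $\rho$ is a bounded positive smooth solution of \eqref{SGG} on $Y_{\reg}\setminus Z=Y\setminus Z$ for the metric $\omega$, with constant $c_\infty$, and is bounded away from zero.

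The key step is to identify $c_\infty$ with $c_k(\omega)$. For this I would use the uniqueness part of Theorem~\ref{MT}, applied on $Y\setminus Z$. Its proof, via the adjoint relation $\mathcal L_{k,\omega}^*=\mathcal L_{n-k-1,\omega}$ and cutoff estimates near an analytic set of codimension $\ge2$, only needs boundedness of the solutions, so the bounded solution of Theorem~\ref{FWWMT} on $Y$, restricted to $Y\setminus Z$, competes with $\rho$. Concretely, pair $\mathcal L_{k,\omega}\rho=c_\infty\rho$ against the bounded positive adjoint solution $\sigma$ of $\mathcal L_{n-k-1,\omega}\sigma=c_k(\omega)\sigma$ on $Y$, multiplied by a cutoff $\chi_\delta$ vanishing near $Z$. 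The capacity of $Z$ is zero because its real codimension is at least $4$ (and the case of a divisor center is trivial, since then $f$ is an isomorphism). The error terms therefore go to zero given finite energy of $\rho$, and we obtain
\[
(c_\infty-c_k(\omega))\int\rho\sigma\,\omega^n=0 .
\]
Hence $c_\infty=c_k(\omega)$. Since $c_{k,\epsilon_i}\to c_k(\omega)\neq 0$, the metric $\omega_{\epsilon_i}$ on $X$ has $\mathrm{sgn}\,c_k(\omega_{\epsilon_i})=\mathrm{sgn}\,c_k(\omega)$ for $i$ large, which proves the first two bullets.

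The main obstacle is checking that the uniform estimates of Section~\ref{ext} apply to a blow-up of a smooth manifold with this specific $\omega_\epsilon$. Three things need verifying:
\begin{itemize}
\item the bounds \eqref{1.5} and \eqref{1.7} for $\omega_\epsilon$ with $A$ independent of $\epsilon$;
\item Pan's uniform Sobolev and Poincar\'e constants;
\item the finite-energy statement for $\rho$ near $Z$.
\end{itemize}
Here I would simply note that $(Y,\omega)$ is a Hermitian complex space and $f$ a resolution, so the paper's results apply verbatim. If some constant there needed $Z$ to be singular-type, I would instead derive the energy bound directly, by multiplying the equation for $\rho_\epsilon$ by $\rho_\epsilon^{-1}$ and integrating.
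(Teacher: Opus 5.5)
Your proposal is correct, and except for one step it follows the paper's proof:
\begin{itemize}
\item the same family $f^*\omega+\epsilon\theta$ with a closed correction (yours is not Hermitian, but $f^*\omega\le2\omega_\epsilon$ for small $\epsilon$ is all Lemma~\ref{uniform2} needs, and the two families differ only by reparametrization and a bounded rescaling, which does not change the sign of $c_k$);
\item uniform bounds from Lemma~\ref{uniform} and Theorem~\ref{C0}, using Pan-type constants, which the paper also only asserts for blow-ups;
\item local convergence on $X\setminus E\cong Y\setminus Z$;
\item the intermediate value argument along a convex path for the third bullet.
\end{itemize}

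The real difference is how you identify $c_\infty$ with $c_k(\omega)$. The paper proves the removable-singularity statement Proposition~\ref{exten}: the bounded limit solves the equation across $Z$ distributionally, so it extends to a smooth solution on $Y$. It then invokes the uniqueness in Theorem~\ref{FWWMT}. You instead test the limit equation against the single function $\chi_\delta\sigma$, which amounts to running Lemma~\ref{200} on $Y\setminus Z$ with explicit tube cutoffs. Your route avoids distributional elliptic regularity. The paper's route gives more: the limit function is a positive multiple of the Fu--Wang--Wang solution on $Y$, not just the right constant.

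Two remarks on your version. First, the existence of $\sigma$ with constant $c_k(\omega)$ needs one line. Theorem~\ref{FWWMT} gives $\sigma>0$ for $\mathcal L_{n-k-1,\omega}$ with constant $c_{n-k-1}(\omega)$. Pairing it with the smooth solution for $\mathcal L_{k,\omega}$ on the compact manifold $Y$ shows $c_{n-k-1}(\omega)=c_k(\omega)$. Second, $\sigma$ is smooth across $Z$. After you move $\mathcal L_{k,\omega}$ onto $\chi_\delta\sigma$, every derivative falls on $\chi_\delta$ or $\sigma$. The error is therefore at most $C\|\rho\|_{L^\infty}\bigl(\|\Delta\chi_\delta\|_{L^1}+\|\mathrm{d}\chi_\delta\|_{L^1}\bigr)=O(\delta^{2r-2})$ with $r=\codim_\C Z\ge2$. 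No finite-energy bound on $\rho$ is required, so that item on your list of obstacles disappears.
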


For a single blow-up $f:X\to Y$, we prove more precisely that $c_k(f^*\omega+\epsilon\theta)\to c_k(\omega)$ for a suitable Hermitian metric $\theta$ on $X$. The limit equation is transferred to the complement of the center in $Y$. 
A extension lemma for solution of  the limit equation is  then applied on $Y$. The proof of Theorem \ref{thm2} is given in Section~\ref{blowups}.

\section{Notation and preliminaries}

Let $X$ be a $n$-dimensional complex manifold and $\omega=\sqrt{-1}g_{i\bar{j}}\mathrm{d}z^{i}\wedge\mathrm{d}\bar{z}^{j}$ be a Hermitian metric on $X$. For any real $(1,1)$-form $\alpha=\sum_{i=1}^{n}(\alpha_{i}\mathrm{d}z^{i}+\alpha_{\bar{i}}\mathrm{d}\bar{z}^{i})$ and $(1,1)$-form $\beta =\sum_{i=1}^{n}(\beta_{i}\mathrm{d}z^{i}+\beta_{\bar{i}}\mathrm{d}\bar{z}^{i})$. The inner product induced by $g$ on $\Lambda T^{*}M$ is
\begin{equation}
    \begin{split}
\langle\alpha,\beta\rangle=\sum_{i,j=1}^{n}g^{i\bar{j}}(\alpha_{i}\beta_{\bar{j}}+\alpha_{\bar{i}}\beta_{j}).
    \end{split}
\end{equation}
For a smooth function $u$ on $X$,  this gives
\begin{equation}\label{eq:norm-convention}
|\partial u|^2=\sum_{i,j}g^{i\bar j}u_i u_{\bar j},\qquad |\mathrm{d}u|^2=2|\partial u|^2,\qquad \ii\partial u\wedge\bar\partial u\wedge\omega^{n-1}=\frac1n|\partial u|^2\omega^n.
\end{equation}
The complex Laplacian associated with the metric $g$ is given by
\begin{equation}
    \Delta u= \frac{n\sqrt{-1}\partial\bar{\partial}u\wedge\omega^{n-1}}{\omega^{n}}=\sum_{i,j}g^{i\bar{j}}\nabla_{i}\nabla_{\bar{j}}u=\sum_{i,j}g^{i\bar{j}}\partial_{i}\bar{\partial}_{\bar{j}}u
\end{equation}
In particular, $\Delta e^u=e^u(\Delta u+|\partial u|^2)$.

Let $X$ be an irreducible reduced complex  space of dimension $n \ge 1$. A chart $(U,\tau,V)$ on $X$ is a triple consisting of an open set $U\subset X$, a closed complex space $V\subset G\subset \mathbb{C}^{N}$ in an open set $G$ of $\mathbb{C}^{N}$ and a biholomorphic map  $\tau:U \to V$ (in the category of complex spaces). The map $\tau:U \to G\subset \mathbb{C}^{N}$ is called a local embedding of the complex space $X$. A complex space can be partitioned into two sets of points: the smooth points denoted by $X_{\mathrm{reg}}$ and the singular points denoted by $X_{\mathrm{sing}}$.

\begin{definition}\label{Herm}
A Hermitian metric on $X$ is a positive real $(1,1)$-form $\omega$ on $X_{\mathrm{reg}}$ with the following property. For every $x\in X$, there are a local embedding $j:U\hookrightarrow V\subset\mathbb{C}^N$ and a smooth Hermitian form $\Omega$ on $V$ such that $\omega|_{U\cap X_{\mathrm{reg}}}=j^*\Omega$. A K\"ahler metric on $X$ is one for which these local extensions $\Omega$ can be chosen K\"ahler.   
\end{definition}
A Hermitian metric on a paracompact complex space  can be constructed  by a partition of unity (see \cite[Section 2]{CMM}). All integrals on a singular space are taken over its regular locus. 

Pan has recently established uniform Sobolev inequalities for the perturbation path of the pullback metric on the desingularization of Hermitian varieties. We use the following uniform inequalities.

\begin{theorem}[Pan {\cite[Theorem 3.5]{Pan}}]\label{UniSo}
Let $X$ be a complex space endowed with a Hermitian metric $\omega$, and let $\pi: \widetilde{X} \to X$ be a resolution of singularities. Fix a Hermitian metric $\widetilde{\omega}$ on $\widetilde{X}$ and consider the family of Hermitian metrics defined by $\omega_{\epsilon} = \pi^{*}\omega + \epsilon \widetilde{\omega}$ for $\epsilon \in (0,1]$. Then the following properties hold:
\begin{enumerate}
\item For any $q \in \big(1, \frac{n}{n-1}\big)$ and every smooth function $f$, there exists a positive constant $C_S$, independent of $\epsilon \in (0,1]$, such that
\begin{equation}
\left( \int_{\widetilde{X}} |f|^{2q} \omega_{\epsilon}^{n} \right)^{\frac{1}{q}} \le C_S \left( \int_{\widetilde{X}} |f|^2 \omega_{\epsilon}^{n} + \int_{\widetilde{X}} |\partial f|^2 \omega_{\epsilon}^{n} \right).
\end{equation}
\item There exists a constant $C_P > 0$, independent of $\epsilon \in (0,1]$, such that for  every smooth function f
\begin{equation}\label{Poin}
\int_{\widetilde{X}} |f-\bar{f}_\epsilon|^2 \omega_{\epsilon}^{n} \le C_P \int_{\widetilde{X}} |\partial f|^2 \omega_{\epsilon}^{n}.
\end{equation}
where $\bar{f}_\epsilon=\frac{\int_{\widetilde{X}} f  \omega_{\epsilon}^{n} }{\int_{\widetilde{X}}   \omega_{\epsilon}^{n} }$
\end{enumerate}
\end{theorem}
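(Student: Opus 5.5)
The plan is to deduce both inequalities from uniform geometric control of the family $\omega_\epsilon=\pi^*\omega+\epsilon\widetilde\omega$, using the Guo--Phong--Song--Sturm machinery \cite{GPSS}. First I localize. Cover $X$ by finitely many charts $j_\alpha:U_\alpha\hookrightarrow V_\alpha\subset\C^N$ on which $\omega=j_\alpha^*\Omega_\alpha$. Shrinking the charts, I get
\[
C^{-1}\omega_{\mathrm{eucl}}\le\Omega_\alpha\le C\,\omega_{\mathrm{eucl}}\quad\text{on }V_\alpha .
\]
Hence on $\pi^{-1}(U_\alpha)$ the form $\pi^*\omega$ is uniformly equivalent to the semipositive closed form $\pi^*(j_\alpha^*\omega_{\mathrm{eucl}})$. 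Likewise $\omega_\epsilon$ is uniformly equivalent, with constants independent of $\epsilon$, to
\[
\widehat\omega_{\epsilon,\alpha}=\pi^*j_\alpha^*\omega_{\mathrm{eucl}}+\epsilon\widetilde\omega .
\]
A Sobolev inequality of the stated form is stable under uniformly equivalent metrics, since both $|\partial f|^2\omega^n$ and $|f|^{2q}\omega^n$ change by bounded factors. It is also stable under gluing with a fixed finite partition of unity, because the cutoff gradients are bounded in $\pi^*\omega$, hence in $\omega_\epsilon$. So it suffices to prove a local Sobolev inequality for functions supported in $\pi^{-1}(U_\alpha)$, with respect to metrics that are Kähler up to the error term $\epsilon\widetilde\omega$. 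I also replace $\widetilde\omega$ by a metric dominated by a Kähler form, which is possible because $\pi$ is projective over $X$ locally.

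The key analytic input is the GPSS theorem. For Kähler metrics $\widehat\omega$ in a fixed family whose volume forms satisfy a uniform entropy bound
\[
\int\Big|\log\frac{\widehat\omega^n}{\Theta}\Big|^{p}\,\widehat\omega^n\le K
\]
against a fixed smooth volume form $\Theta$, it gives a Sobolev inequality with any exponent $q<n/(n-1)$ and constant depending only on $K$, the volume and $q$. Here I check the entropy bound directly. In local coordinates on $\widetilde X$,
\[
\omega_\epsilon^n\ \ge\ \pi^*\omega^n\ \ge\ c\,|J_\pi|^2\Theta ,
\]
where $J_\pi$ is a holomorphic Jacobian-type function vanishing only on $D$, while $\omega_\epsilon^n\le C\Theta$. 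Thus $\log(\omega_\epsilon^n/\Theta)$ is bounded above and bounded below by $\log|J_\pi|^2-C$, which lies in every $L^p$. This gives the uniform entropy bound. The main obstacle is that GPSS is stated for Kähler classes (Monge--Ampère potentials), while our $\omega_\epsilon$ is only Hermitian. I handle this by the uniform equivalence above to the locally Kähler forms $\widehat\omega_{\epsilon,\alpha}$, and if necessary by writing $\widehat\omega_{\epsilon,\alpha}^n$ as a solution of a complex Monge--Ampère equation in a fixed big semipositive class on a neighborhood of $\pi^{-1}(\overline{U_\alpha})$. Alternatively, for $\epsilon$-independent pieces one can invoke the Michael--Simon inequality on the minimal subvariety $j_\alpha(U_\alpha)\subset\C^N$. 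That inequality gives exponent $n/(n-1)$ in real dimension $2n$, and perturbing by $\epsilon\widetilde\omega$ only helps volume while changing gradients boundedly on the chosen pieces.

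For the Poincaré inequality I argue by contradiction and compactness. Suppose there are $\epsilon_i$ and $f_i$ with $\bar f_{i,\epsilon_i}=0$, $\int|f_i|^2\omega_{\epsilon_i}^n=1$ and $\int|\partial f_i|^2\omega_{\epsilon_i}^n\to0$. On compact subsets of $\widetilde X\setminus D$ the metrics converge smoothly, so a subsequence converges in $L^2_{\mathrm{loc}}$ to a function $f$ with $\partial f=0$. Since $f$ is real, it is locally constant on the connected set $X_{\mathrm{reg}}$ (connected because $X$ is irreducible), hence constant. The uniform Sobolev inequality gives a uniform $L^{2q}$ bound with $q>1$. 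Together with $\int_{\pi^{-1}(N_\delta)}\omega_\epsilon^n\to0$ uniformly as the neighborhood $N_\delta$ of $D$ shrinks, this prevents mass from concentrating near $D$. So the limit satisfies $\int f^2\pi^*\omega^n=1$ and $\int f\,\pi^*\omega^n=0$, which contradicts $f$ being constant. (If $\epsilon_i$ stays away from $0$, the argument is the standard compact-manifold one.)
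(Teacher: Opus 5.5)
The paper does not prove this statement; it quotes it from Pan, who obtains it from the Guo--Phong--Song--Sturm estimates. There is a genuine gap in your proposal: the uniform Sobolev inequality, which is the whole content of the theorem, is never actually established.

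Several of your steps are sound. The comparison of $\omega_\epsilon$ with $\pi^*j_\alpha^*\omega_{\mathrm{eucl}}+\epsilon\widetilde\omega$ is correct. So is the gluing with the pullback of a partition of unity, whose gradients are bounded because $\pi^*\omega\le\omega_\epsilon$. The entropy bound also holds, provided you use the sum of squared $n\times n$ minors of $d(j_\alpha\circ\pi)$ rather than a single Jacobian. Your compactness proof of the Poincar\'e inequality works once a uniform Sobolev inequality is known, since $\omega_\epsilon^n\le\omega_1^n$ makes the mass near $D$ uniformly small and $X_{\reg}$ is connected.

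The failure is in how you invoke GPSS. GPSS concerns K\"ahler metrics on a \emph{compact} K\"ahler manifold in controlled cohomology classes. Its proof is global: the Green's function bounds come from auxiliary complex Monge--Amp\`ere equations solved on the whole manifold. After your localization you only have K\"ahler forms on the non-compact pieces $\pi^{-1}(U_\alpha)$, tested against compactly supported functions. There is no compact manifold and no class for GPSS to act on, and rewriting $\widehat\omega_{\epsilon,\alpha}^n$ as a Monge--Amp\`ere measure ``on a neighborhood'' does not supply one. Nor can GPSS be applied globally: $\omega_\epsilon$ is not closed, and $\widetilde X$ need not be K\"ahler at all. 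For the paper's example $H\times C$, every resolution contains copies of the Hopf surface $H$.

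The Michael--Simon alternative does not close the gap either. It gives a Sobolev inequality for $\pi^*\omega$ on $X_{\reg}$, that is, for $\epsilon=0$. Near $D$, however, $\omega_\epsilon$ and $\pi^*\omega$ are not uniformly equivalent. Passing from $\pi^*\omega$ to $\omega_\epsilon$ enlarges the left-hand side $\int|f|^{2q}\omega_\epsilon^n$ as well as the right-hand side, so the perturbation does not ``only help''. Controlling that integral near $D$ uniformly in $\epsilon$ is exactly what has to be proved.

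This route also cannot be rescued by a holomorphic embedding of $\pi^{-1}(U_\alpha)$ into flat space with induced metric comparable to $\omega_\epsilon$, because $\pi^{-1}(U_\alpha)$ contains compact fibres of $\pi$. What is missing is a genuinely global argument on the compact Hermitian manifold $(\widetilde X,\omega_\epsilon)$. One possibility is to rerun the GPSS Green's function argument with a Hermitian background, where the auxiliary Monge--Amp\`ere equations remain solvable and Guo--Phong-type $L^\infty$ estimates still hold.
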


\section{Uniform a priori estimates}\label{C00}
Let $(X,\omega)$ be a $n$-dimensional Hermitian manifold. Due to the result in \cite{FWW}, the following equation 
\begin{equation}\label{k-Gau}
    \sqrt{-1}\partial\bar{\partial}(\rho \omega^{k})\wedge\omega^{n-k-1}=c_{k}\rho\omega^{n},\;\;\inf\rho=1
\end{equation}
admits a unique smooth function $\rho$ and a constant $c_k$.

\subsection{A priori $L^{\infty}$ estimates}

Using Moser iteration, Pan \cite{Pan} established a uniform upper bound for the solution $\rho$ of equation \eqref{GauE}. This bound depends on the Sobolev and Poincar\'e constants.  We can modify Pan's method to obtain a priori $L^{\infty}$ estimates for equation \eqref{k-Gau}.

\begin{lemma}\label{FL}
    Let $f(x), s(x), F(x)$ be smooth functions defined on $\mathbb{R}^{+}$ which satisfy $F^{'}(x)=f(x), s(x)=xf(x)-F(x)$.
 Then we have
    \begin{equation}\label{GG}
    \begin{split}
        \int_{X}f^{'}(\rho)\sqrt{-1}\partial \rho\wedge \bar{\partial}\rho\wedge\omega^{n-1} &=  \int_{X}  s(\rho)\sqrt{-1}\partial\bar{\partial}\omega^{k}\wedge\omega^{n-k-1}+ \int_{X} F(\rho) \omega^{k}\wedge\\&\sqrt{-1}\partial\bar{\partial}\omega^{n-k-1} -c_{k}\int_{X}\rho f(\rho)\omega^{n} 
    \end{split}
\end{equation}
\end{lemma}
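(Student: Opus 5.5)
The plan is to multiply the equation \eqref{k-Gau} by $f(\rho)$, integrate over the compact manifold $X$, and integrate by parts twice. Throughout I use Stokes' theorem on $X$, which has no boundary. The bookkeeping rests on the identities $F'=f$ and $s'(x)=xf'(x)$, together with $\rho f(\rho)=s(\rho)+F(\rho)$. These let every term in which $\rho$ is differentiated be rewritten as $\partial$ or $\bar\partial$ of $s(\rho)$ or $F(\rho)$.

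\textbf{Step 1 (first integration by parts).} Start from
\[
c_k\int_X\rho f(\rho)\,\omega^n=\int_X f(\rho)\,\ii\partial\bar\partial(\rho\omega^k)\wedge\omega^{n-k-1}.
\]
Move the $\partial$ onto $f(\rho)\omega^{n-k-1}$. This gives, up to the sign fixed by degrees,
\[
-\int_X \ii\,\partial\big(f(\rho)\omega^{n-k-1}\big)\wedge\bar\partial(\rho\omega^k).
\]
Expanding both factors by the Leibniz rule produces four terms.
\begin{itemize}
\item $f'(\rho)\,\ii\partial\rho\wedge\bar\partial\rho\wedge\omega^{n-1}$, which is the gradient term on the left of \eqref{GG}.
\item $\rho f'(\rho)\,\ii\partial\rho\wedge\partial\omega^{n-k-1}\wedge\bar\partial\omega^k$-type terms. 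More precisely, $\rho f'(\rho)\partial\rho=\partial s(\rho)$, so this term is $\ii\,\partial s(\rho)\wedge\omega^{n-k-1}\wedge\bar\partial\omega^k$.
\item $f(\rho)\bar\partial\rho=\bar\partial F(\rho)$, which gives $\ii\,\bar\partial F(\rho)\wedge\omega^k\wedge\partial\omega^{n-k-1}$.
\item $\rho f(\rho)\,\ii\,\partial\omega^{n-k-1}\wedge\bar\partial\omega^k=(s(\rho)+F(\rho))\,\ii\,\partial\omega^{n-k-1}\wedge\bar\partial\omega^k$.
\end{itemize}

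\textbf{Step 2 (second integration by parts).} In the $s$-term, move $\partial$ off $s(\rho)$. This yields
\[
\int_X s(\rho)\,\ii\partial\bar\partial\omega^k\wedge\omega^{n-k-1}
\]
plus a cross term $\pm\int_X s(\rho)\,\ii\,\partial\omega^{n-k-1}\wedge\bar\partial\omega^k$. Similarly, in the $F$-term move $\bar\partial$ off $F(\rho)$. This yields
\[
\int_X F(\rho)\,\omega^k\wedge\ii\partial\bar\partial\omega^{n-k-1}
\]
(using $\bar\partial\partial=-\partial\bar\partial$) plus a cross term $\pm\int_X F(\rho)\,\ii\,\bar\partial\omega^k\wedge\partial\omega^{n-k-1}$. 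The point is that these two cross terms cancel exactly against the fourth term of Step 1. Rearranging then gives \eqref{GG}.

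\textbf{Main obstacle.} The only real difficulty is sign bookkeeping. One must track signs when commuting odd forms ($\partial\rho$, $\partial\omega^{n-k-1}$, $\bar\partial\omega^k$ are all of odd degree), the factor $\ii$, and the sign in $\int\partial\alpha\wedge\beta=-(-1)^{\deg\alpha}\int\alpha\wedge\partial\beta$. Only with these signs correct does the cancellation of the mixed $\partial\omega^{n-k-1}\wedge\bar\partial\omega^k$ terms come out exactly.

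As a safeguard, I would check the final signs in the special case $k=n-1$. There $\omega^{n-k-1}=1$, so the $F$-term and the cross terms vanish, and the identity must reduce to the familiar Gauduchon-type identity obtained by pairing $\ii\partial\bar\partial(\rho\omega^{n-1})$ with $f(\rho)$. A second check is the case $f\equiv 1$ (so $F(x)=x$ and $s\equiv 0$), where \eqref{GG} reduces to $\int_X\rho\,\omega^k\wedge\ii\partial\bar\partial\omega^{n-k-1}=c_k\int_X\rho\,\omega^n$. This follows directly by pairing \eqref{k-Gau} with $1$ and integrating by parts twice.
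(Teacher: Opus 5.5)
Your proposal is correct and follows the paper's proof essentially step for step. You multiply by $f(\rho)$, integrate by parts once to get four terms, and rewrite them using $\rho f'(\rho)\partial\rho=\partial s(\rho)$ and $f(\rho)\bar\partial\rho=\bar\partial F(\rho)$. You then integrate by parts a second time and cancel the mixed terms via $\rho f=s+F$. The signs you left as $\pm$ do work out. Relative to $\ii\,\partial\omega^{n-k-1}\wedge\bar\partial\omega^k$, the three mixed contributions have total coefficient $s(\rho)+F(\rho)-\rho f(\rho)=0$, so only routine bookkeeping remains to be written out.
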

\begin{proof}
By \eqref{k-Gau}, we have
        \begin{equation}\label{1.1}
        c_{k}\int_{X}\rho f(\rho)\omega^{n}=\int_{X}f(\rho)\sqrt{-1}\partial\bar{\partial}(\rho\omega^{k})\wedge\omega^{n-k-1}.
    \end{equation}
By Stokes’ theorem, it follows that
    \begin{equation}\label{1.2}
        \begin{split}
            \int_{X}f(\rho)&\sqrt{-1}\partial\bar{\partial}(\rho\omega^{k})\wedge\omega^{n-k-1}= -\int_{X} f^{'}(\rho) \sqrt{-1}\partial \rho\wedge \bar{\partial}(\rho\omega^{k})\wedge\omega^{n-k-1}
            + \\& \int_{X}f(\rho)\sqrt{-1}\bar{\partial}(\rho\omega^{k})\wedge\partial\omega^{n-k-1} \\&
            =-\int_{X}f^{'}(\rho)\sqrt{-1}\partial \rho\wedge \bar{\partial}\rho\wedge\omega^{n-1} - \int_{X}\rho f^{'}(\rho)\sqrt{-1}\partial \rho\wedge \bar{\partial}\omega^{k}\wedge\omega^{n-k-1}\\&+\int_{X}f(\rho)\sqrt{-1}\bar{\partial} \rho\wedge \omega^{k}\wedge\partial\omega^{n-k-1}+\int_{X}\rho f(\rho)\sqrt{-1}\bar{\partial}\omega^{k}\wedge\partial\omega^{n-k-1}.
        \end{split}
    \end{equation}
    Since $s(x)=xf(x)-F(x)$ and $F^{'}(x)=f(x)$, we have $s^{'}(x)=xf^{'}(x)$,  apply integration by parts 
    \begin{equation}\label{1.3}
        \begin{split}
            \int_{X}\rho f^{'}(\rho)&\sqrt{-1}\partial \rho\wedge \bar{\partial}\omega^{k}\wedge\omega^{n-k-1}= \int_{X} \sqrt{-1}\partial s(\rho)\wedge\bar{\partial}\omega^{k}\wedge\omega^{n-k-1} \\&= - \int_{X}  s(\rho)\sqrt{-1}\partial\bar{\partial}\omega^{k}\wedge\omega^{n-k-1}+ \int_{X} s(\rho)\sqrt{-1}\bar{\partial}\omega^{k}\wedge\partial\omega^{n-k-1}.
        \end{split}
    \end{equation}
If $F^{'}(x)=f(x)$, it follows that
    \begin{equation}\label{1.4}
        \begin{split}
\int_{X}f(\rho)&\sqrt{-1}\bar{\partial} \rho\wedge \omega^{k}\wedge\partial\omega^{n-k-1}=\int_{X}\sqrt{-1}\bar{\partial} F(\rho)\wedge \omega^{k}\wedge\partial\omega^{n-k-1} \\&= -\int_{X} F(\rho)\sqrt{-1}\bar{\partial} \omega^{k}\wedge\partial\omega^{n-k-1} + \int_{X} F(\rho) \omega^{k}\wedge\sqrt{-1}\partial\bar{\partial}\omega^{n-k-1}.
        \end{split}
    \end{equation}
Combining  this with \eqref{1.1}, \eqref{1.2} and \eqref{1.3}, we obtain \eqref{GG}.

\end{proof}

\begin{lemma}\label{iden}
For every  $p\ne0,1$, we have
\begin{equation}\label{1.611}
    \begin{split}
        \int_{X}|\partial \rho^{\frac{p}{2}}|^{2}\omega^{n}&= 
        \frac{np}{4}\int_{X}\rho^{p}\sqrt{-1}\partial\bar{\partial}\omega^{k}\wedge\omega^{n-k-1}+\frac{pn}{4(p-1)}\int_{X}\rho^{p}\sqrt{-1}\partial\bar{\partial}\omega^{n-k-1}\wedge\omega^{k}\\&-\frac{nc_{k}p^{2}}{4(p-1)}\int_{X}\rho^{p}\omega^{n}
    \end{split}
\end{equation}
\end{lemma}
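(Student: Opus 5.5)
The plan is to specialize Lemma~\ref{FL} to a power function and then rewrite the gradient term in terms of $\rho^{p/2}$. I will take
\[
f(x)=x^{p-1},\qquad F(x)=\frac{x^{p}}{p},\qquad s(x)=xf(x)-F(x)=\frac{p-1}{p}\,x^{p}.
\]
These are smooth on $\mathbb{R}^{+}$ and satisfy $F'=f$. The restriction $p\neq 0$ is exactly what makes $F=x^p/p$ a legitimate antiderivative. Then $f'(x)=(p-1)x^{p-2}$, and since $\rho\ge 1>0$ all compositions with $\rho$ are smooth. The integrations by parts behind Lemma~\ref{FL} are valid because $X$ is compact without boundary.

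First I rewrite the left-hand side of \eqref{GG}. Using \eqref{eq:norm-convention},
\[
\int_X f'(\rho)\,\ii\partial\rho\wedge\bar\partial\rho\wedge\omega^{n-1}=\frac{p-1}{n}\int_X\rho^{p-2}|\partial\rho|^2\omega^n .
\]
The chain rule gives $\partial\rho^{p/2}=\frac p2\rho^{\frac p2-1}\partial\rho$, hence $|\partial\rho^{p/2}|^2=\frac{p^2}{4}\rho^{p-2}|\partial\rho|^2$. The left-hand side therefore equals
\[
\frac{4(p-1)}{np^2}\int_X|\partial\rho^{p/2}|^2\omega^n .
\]

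Next I substitute $s$ and $F$ into the right-hand side of \eqref{GG}. This gives
\[
\frac{p-1}{p}\int_X\rho^p\,\ii\partial\bar\partial\omega^k\wedge\omega^{n-k-1}+\frac1p\int_X\rho^p\,\omega^k\wedge\ii\partial\bar\partial\omega^{n-k-1}-c_k\int_X\rho^p\omega^n .
\]
Finally I multiply both sides by $\frac{np^2}{4(p-1)}$, which is legitimate since $p\neq 1$. The three coefficients become $\frac{np}{4}$, $\frac{np}{4(p-1)}$ and $-\frac{nc_kp^2}{4(p-1)}$. Since $\omega^k$ and $\ii\partial\bar\partial\omega^{n-k-1}$ are even-degree forms, they commute in the wedge product, so this is exactly \eqref{1.611}.

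There is no real obstacle here. The only points needing care are the normalization in \eqref{eq:norm-convention}, which produces the factor $1/n$, and the exclusion of $p=0$ and $p=1$, which are precisely where $F$ or the final division fails.
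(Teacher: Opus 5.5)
Your proof is correct and is essentially the paper's argument: the paper applies Lemma~\ref{FL} with $f(x)=\frac{p^{2}}{4(p-1)}x^{p-1}$, $F(x)=\frac{p}{4(p-1)}x^{p}$, $s(x)=\frac{p}{4}x^{p}$, and then multiplies by $n$ using \eqref{eq:norm-convention}. Your choice $f(x)=x^{p-1}$ followed by multiplication by $\frac{np^2}{4(p-1)}$ gives the same identity, because \eqref{GG} is linear in the triple $(f,F,s)$.
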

\begin{proof}
    We take
\[ f(x)=\frac{p^{2}}{4(p-1)}x^{p-1},\qquad F(x)=\frac{p}{4(p-1)}x^{p},\qquad s(x)=\frac{p}{4}x^{p} \]
 For $p\ne 0,1$,  applying Lemma \ref{FL}  yields \eqref{1.611}
\end{proof}

\begin{theorem}\label{C0}
Let $(X, \omega)$ be a  $n$-dimensional compact Hermitian manifold. Suppose \eqref{1.5} and \eqref{1.7} holds for Hermitian metric $\omega$. If $(\rho,c_k)$ is the solution of \eqref{k-Gau}. 
Then we have the following uniformly $L^{\infty}$-estimates
     \begin{equation}
         \sup_{X} \rho \le C,
     \end{equation}
     where C depends on
     $n,A,C_S, C_P,V_\omega,|c_k|$ and $\beta$.
\end{theorem}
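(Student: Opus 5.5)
The plan is to run a Moser iteration based on the energy identity of Lemma~\ref{iden}, combined with the Sobolev inequality (constant $C_S$, exponent $\beta\in(1,\frac{n}{n-1})$) and the Poincar\'e inequality (constant $C_P$). The normalization $\inf\rho=1$ by itself controls nothing from above. So the argument has two stages: first produce an $L^{p_0}$ bound on $\rho$ for some small $p_0>0$, depending only on the listed data, and then iterate from $L^{p_0}$ to $L^\infty$.

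\emph{Iteration step.} For $p>1$, Lemma~\ref{iden} together with \eqref{1.5} and \eqref{1.7} gives
\[
\int_X|\partial\rho^{p/2}|^2\omega^n\le \frac{n}{4}\Big(pA+\frac{p}{p-1}A+\frac{p^2}{p-1}|c_k|\Big)\int_X\rho^p\omega^n\le C(n,A,|c_k|)\,p\int_X\rho^p\omega^n,
\]
where the last inequality uses $p\ge 2$, say. Apply the Sobolev inequality to $f=\rho^{p/2}$. This yields $\|\rho\|_{L^{p\beta}}\le (Cp)^{1/p}\|\rho\|_{L^p}$, and iterating along $p_j=p_1\beta^j$ gives $\sup\rho\le C\|\rho\|_{L^{p_1}}$ with $p_1=2$. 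To go down from $L^2$ to $L^{p_0}$ for small $p_0\in(0,1)$, I will use the same identity with $p\in(0,1)$. There the coefficient $\frac{p}{p-1}$ is negative, so I expect that sign problem to need some care. Alternatively one can use the standard interpolation trick $\|\rho\|_{L^2}\le \|\rho\|_\infty^{1-p_0/2}\|\rho\|_{L^{p_0}}^{p_0/2}$, which, combined with $\sup\rho\le C\|\rho\|_{L^2}$, yields $\sup\rho\le C\|\rho\|_{L^{p_0}}$.

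\emph{Main obstacle: the $L^{p_0}$ bound.} Set $u=\rho^{p_0/2}$ with $p_0>0$ small. Lemma~\ref{iden} gives
\[
\int_X|\partial u|^2\omega^n\le \frac{n}{4}\Big(p_0A+\frac{p_0}{1-p_0}A+\frac{p_0^2}{1-p_0}|c_k|\Big)\int_X u^2\omega^n=:\delta(p_0)\int_X u^2\omega^n,
\]
and $\delta(p_0)\to0$ as $p_0\to0$. This is the point noted in the introduction: a small positive power makes the energy coefficient tend to zero. By Poincar\'e,
\[
\int u^2\le 2V_\omega\bar u^2+2C_P\int|\partial u|^2\le 2V_\omega\bar u^2+2C_P\delta\int u^2 .
\]
Choosing $p_0$ so that $2C_P\delta\le\frac12$ gives $\int u^2\le 4V_\omega\bar u^2$. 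So it remains to bound $\bar u=V_\omega^{-1}\int\rho^{p_0/2}$ from above. For this I will use negative powers and $\inf\rho=1$. Lemma~\ref{iden} with $p=-q<0$ has coefficients $-\frac{nq}{4}$, $\frac{nq}{4(1+q)}$ and $-\frac{nq^2}{4(1+q)}c_k$, which are all bounded by $C q$. Running the same Moser iteration on $w=\rho^{-q/2}$ then gives $\sup\rho^{-1}$ in terms of $\|\rho^{-1}\|_{L^{q}}$, i.e.\ $1=\sup\rho^{-q}\le C\,V_\omega^{-1}\int\rho^{-q}$. The Poincar\'e absorption argument for $w$ shows in the same way that $\int w^2\le 4V_\omega\bar w^2$ for small $q$. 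To link the two, I will take $q=p_0$ and apply Poincar\'e to $\log\rho$, whose gradient satisfies $\int|\partial\log\rho|^2\le C$ via Lemma~\ref{FL} with $f(x)=-1/x$, $F=-\log x$, $s=-1-(-\log x)$. Note that $\log$ terms appear there, but they enter with coefficient $A$ only after subtracting the average of $\log\rho$, so a Poincar\'e bound on $\log\rho-\overline{\log\rho}$ (a John--Nirenberg-type substitute via the iteration) gives
\[
\int\rho^{p_0}\int\rho^{-p_0}\le C V_\omega^2 .
\]
Together with $\int\rho^{-p_0}\ge V_\omega/C$ from the negative-power step, this bounds $\int\rho^{p_0}$. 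I expect this crossover step to be the most delicate part. If the $\log$ route fails, my fallback is to bound $\bar u^2$ directly by $\overline{u^2}$ and to use $\int u^2=\int\rho^{p_0}$ interpolated against the negative-power integral by H\"older. Tracking constants, every bound depends only on $n,A,C_S,C_P,V_\omega,|c_k|,\beta$.
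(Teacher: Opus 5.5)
Your Moser iteration for $p\ge2$, the interpolation down to $L^{p_0}$, the negative-power iteration and the small-$p$ energy bound all match the paper. The gap is the step that bounds $\int_X\rho^{p_0}\omega^n$, which is the only place the normalization $\inf\rho=1$ can enter.

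\textbf{Why your $L^{p_0}$ step does not close.} You fix $p_0$ only by $2C_P\delta(p_0)\le\frac12$. What you keep is then the scale-invariant reverse H\"older inequality $\int u^2\le 4V_\omega\bar u^2$. Together with $1\le CV_\omega^{-1}\int\rho^{-p_0}$, this bounds nothing. Take $\rho=1$ on half the volume and $\rho=M$ on the other half. Then $\int u^2\le 4V_\omega\bar u^2$ holds for every $M\ge1$, and $V_\omega^{-1}\int\rho^{-p_0}\ge\frac12$.

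\textbf{Why the $\log\rho$ link does not repair it.} The bound $\int|\partial\log\rho|^2\omega^n\le C$ is in fact available. The $\log\rho$ terms pair with $\ii\partial\bar\partial\omega^k\wedge\omega^{n-k-1}-\omega^k\wedge\ii\partial\bar\partial\omega^{n-k-1}$, whose total integral vanishes by Stokes, so Poincar\'e absorbs them. But a global $W^{1,2}$ bound on $\log\rho-\overline{\log\rho}$ gives, via Sobolev, only $L^{2\beta}$ control. It does not give the exponential integrability $\int e^{p_0|\log\rho-\overline{\log\rho}|}\le C$ that you need. John--Nirenberg requires a BMO bound on all small balls, hence localized test functions and local geometric control. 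The admissible data $n,A,C_S,C_P,V_\omega,|c_k|,\beta$ do not provide this. So $\int\rho^{p_0}\int\rho^{-p_0}\le CV_\omega^2$ remains unproved. Your fallback runs the wrong way: H\"older only gives the lower bound $\int\rho^{p_0}\int\rho^{-p_0}\ge V_\omega^2$.

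\textbf{The missing idea.} This is how the paper closes the argument. Use the negative-power bound to produce a \emph{good set}, and let the smallness of $p_0$ depend on its measure. From $1=\sup\rho^{-1}\le K_2\int\rho^{-1}\omega^n$ and $\rho\ge1$, the set $E=\{\rho<2K_2V_\omega\}$ satisfies $V_\omega(E)\ge 1/(2K_2)$. Integrating $\bar u^2\le 2|u-\bar u|^2+2u^2$ over $E$ gives $\bar u^2\le 4K_2\int_X|u-\bar u|^2+2(2K_2V_\omega)^{p_0}$. Combining this with Poincar\'e and the energy bound of Lemma~\ref{iden} yields
\[
\int_X u^2\,\omega^n\le (1+4V_\omega K_2)\,C_P\,C(p_0)\int_X u^2\,\omega^n+2V_\omega(2K_2V_\omega)^{p_0},\qquad C(p)\to0 .
\]
Now choose $p_0$ with $(1+4V_\omega K_2)C_PC(p_0)\le\frac12$. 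This threshold depends on $K_2$, which your choice lacks. It gives $\int\rho^{p_0}\le 4V_\omega(2K_2V_\omega)^{p_0}$, and your interpolation finishes the proof. Because the global energy coefficient $C(p)$ tends to zero, no John--Nirenberg crossover is needed.
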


\begin{proof}
 For $p\ge 2$,  applying Lemma \ref{iden} with \eqref{1.5}  and \eqref{1.7} yields
\begin{equation}\label{1.61}
    \begin{split}
        \int_{X}|\partial \rho^{\frac{p}{2}}|^{2}\omega^{n}&= \frac{np^{2}}{4}\int_{X}\rho^{p-2}\sqrt{-1}\partial\rho\wedge\bar{\partial}\rho\wedge\omega^{n-1}\\&
        =\frac{np}{4}\int_{X}\rho^{p}\sqrt{-1}\partial\bar{\partial}\omega^{k}\wedge\omega^{n-k-1}+\frac{pn}{4(p-1)}\int_{X}\rho^{p}\sqrt{-1}\partial\bar{\partial}\omega^{n-k-1}\wedge\omega^{k}\\&-\frac{nc_{k}p^{2}}{4(p-1)}\int_{X}\rho\omega^{n}
        \\& \le \int_{X}\left(\frac{Anp}{4}\rho^{p}+\frac{np}{4(p-1)}A\rho^{p}+\frac{n|c_{k}|p^{2}}{4(p-1)}\rho^{p}\right)\omega^{n}\\&
        \le Dp\int_M\rho^p\omega^{n},
    \end{split}
\end{equation}
where $D=1+n(A+|c_k|)$ is sufficient.
Combining Sobolev inequality 
with \eqref{1.61}, we have
\begin{equation}
    \begin{split}
        \left(\int_{X}(\rho^{\frac{p}{2}})^{2\beta}\omega^{n}\right)^{\frac{1}{\beta}}&\le C_S\left(\int_{X}\rho^{p}\omega^{n}+ \int_{X}|\partial \rho^{\frac{p}{2}}|^{2}\omega^{n} \right)\\& \le C_S(1+Dp)\int_{X}\rho^{p}\omega^{n}.
    \end{split}
\end{equation}
where $\beta\in(1,\frac{n}{n-1})$.
It follows that 
\begin{equation}
    \begin{split}
        ||\rho||_{L^{\beta p}}\le \bigl(C_S(1+Dp)\bigr)^{1/p}||\rho||_{L^{p}},\;p\ge 2.
    \end{split}
\end{equation}
Taking $p_i=2\beta^i$ and iterating gives
\begin{equation}\label{1.73}
\|\rho\|_{L^\infty}\le K_1\|\rho\|_{L^2},\qquad K_1=\prod_{i=0}^\infty\bigl(C_S(1+2D\beta^i)\bigr)^{\frac{1}{2\beta^i}}.
\end{equation}
The product is finite because $\sum_i\beta^{-i}(1+i)<\infty$.

We choose $f(x)=-\frac{p^{2}}{4(p+1)}x^{-p-1}$, so that 
$f^{'}(x)=\frac{p^2}{4}x^{-p-2}$, $s(x)=-\frac{p}{4}x^{-p}$ and $F(x)=\frac{p}{4(p+1)}x^{-p}$. For $p\ge 1$, applying \eqref{GG}, this gives
\begin{equation}\label{1.63}
    \begin{split}
        \int_{X}|\partial\rho^{-\frac{p}{2}}|^{2}\omega^{n}& 
        =\frac{np^{2}}{4}\int_{X}\rho^{-p-2}\sqrt{-1}\partial\rho\wedge\bar{\partial}\rho\wedge\omega^{n-1}
        \\&=
          -\frac{np}{4}\int_{X}  \rho^{-p}\sqrt{-1}\partial\bar{\partial}\omega^{k}\wedge\omega^{n-k-1}+
           \frac{np}{4(p+1)}\int_{X} \rho^{-p} \sqrt{-1}\partial\bar{\partial}\omega^{n-k-1}\wedge\omega^{k}\\&
           +\frac{nc_{k}p^{2}}{4(p+1)}\int_{X}\rho^{-p}\omega^{n}\\& \le 
           \frac{npA}{4}\int_{X}\rho^{-p}\omega^{n}+\frac{nA}{4}\int_{X}\rho^{-p}\omega^{n}+\frac{n|c_{k}|p}{4}\int_{X}\rho^{-p}\omega^{n}
           \\& \le Dp\int_{X}\rho^{-p}\omega^{n}.
    \end{split}
\end{equation}
Combining Sobolev inequality with \eqref{1.63}, apply Moser's iteration argument with $p_i=\beta^i$, we have
\begin{equation}\label{eq:negative-Moser}
 1=\sup_{X} \rho^{-1}\le K_{2}\int_{X}\rho^{-1}\omega^{n},\qquad K_2=\prod_{j=0}^\infty\bigl(C_S(1+D\beta^j)\bigr)^{1/\beta^j}<\infty.
\end{equation}
Both $K_1$ and $K_2$ depend only on $n,A,|c_k|,C_S$ and $\beta$.
 It is easy to see that
\begin{equation}
    \begin{split}
        \frac{1}{K_2}&\le \int_{\{\rho< 2K_2 V_{\omega}\}} \rho^{-1}\omega^{n} + \int_{\{\rho \ge 2K_2 V_{\omega}\}}\rho^{-1} \omega^{n} \le \int_{\{\rho< 2K_2 V_{\omega}\}} \omega^{n} + \frac{1}{2K_2},
    \end{split}
\end{equation}
where $V_\omega=\int_{X}\omega^{n}$. Set $E=\{\rho< 2K_2 V_{\omega}\}, V_\omega(E)=\int_{E} \omega^{n}$. Then it follows that 
\begin{equation}\label{volu}
    \frac{1}{2 K_2} \le V_\omega(E) .
\end{equation}
 
 For $0<p<1$, equation \eqref{iden} gives
\begin{equation}\label{sm}
\int_M|\partial\rho^{p/2}|^2\omega^{n}\le C(p)\int_M\rho^{p}\omega^{n},\qquad C(p)=\frac n4\left(Ap+\frac{Ap+|c_k|p^2}{1-p}\right).
\end{equation}
In particular, $C(p)\to0$ as $p\to 0$. Combining \eqref{sm}   with the Poincare inequality \eqref{Poin}, we obtain
\begin{equation}\label{9.1}
    \begin{split}
        \int_{X} |\rho^{\frac{p}{2}}-\overline{{\rho^{\frac{p}{2}}}}|^2 \omega^n &\le C_P \int_{X}|\partial\rho^{\frac{p}{2}}|^2\omega^n\le C_P C(p) \int_X\rho^{p}\omega^{n} 
    \end{split}
\end{equation}
where $\overline{{\rho^{\frac{p}{2}}}}=\frac{1}{V_\omega}\int_X \rho^{\frac{p}{2}} \omega^n$. 

Integrating $\left(\overline {\rho^{\frac{p}{2}}}\right)^2\le2|\rho^{\frac{p}{2}}-\overline {\rho^{\frac{p}{2}}}|^2+2 \rho^{p}$ over $E$ and using \eqref{volu} gives
\begin{equation}\label{1.70}
    \begin{split}
         \left(\overline {\rho^{\frac{p}{2}}}\right)^2&\le \frac{2}{V_\omega(E)} \left(\int_E |\rho^{\frac{p}{2}}-\overline {\rho^{\frac{p}{2}}}|^2 \omega^n + \int_E (2K_2 V_\omega)^p \omega^n\right)\\&
        \le 4K_2 \int_E |\rho^{\frac{p}{2}}-\overline {\rho^{\frac{p}{2}}}|^2 \omega^n + 2 (2K_2 V_\omega)^p
    \end{split}
\end{equation}
It follows that 
\begin{equation}\label{1.71}
    \begin{split}
        \int_X
\rho^p\omega^n &= \int_E |\rho^{\frac{p}{2}}-\overline {\rho^{\frac{p}{2}}}|^2 \omega^n + V_\omega \left(\overline {\rho^{\frac{p}{2}}}\right)^2 \\&\le (4V_\omega K_2 +1)\int_E |\rho^{\frac{p}{2}}-\overline {\rho^{\frac{p}{2}}}|^2 \omega^n + 2 V_\omega (2K_2 V_\omega)^p \\& \le  C_P C(p)(4V_\omega K_2 +1)\int_X \rho^p \omega^n + 2 V_\omega(2K_2 V_\omega)^p
\end{split}
\end{equation}
The first inequality comes from \eqref{1.70}, the second inequality comes from \eqref{9.1}. Choose a fixed $p_0 \in (0,1)$ such that
\begin{equation}
    C_P C(p_0)(4V_\omega K_2 +1) \le \frac{1}{2}
\end{equation}
This is possible because $C(p)\to0$ as $p\to 0$. Let $p=p_0$, \eqref{1.71}  yields
\begin{equation}\label{1.75}
    \int_X
\rho^{p_0}\omega^n \le 4 V_\omega(2K_2 V_\omega)^{p_0}
\end{equation}
Combining \eqref{1.75}   with \eqref{1.73} , we obtain
\begin{equation}
    \begin{split}
        \|\rho\|^{2}_{L^\infty}&\le K_1^{2}\int_{X}\rho^2\omega^n \\&= K_1^{2}\int_{X}\rho^{p_0}\rho^{2-p_0}\omega^n\\& \le 
        K_1^{2}\|\rho\|^{2-p_0}_{L^\infty}\cdot 4 V_\omega(2K_2 V_\omega)^{p_0}
    \end{split}
\end{equation}
Therefore
\begin{equation}
    \|\rho\|_{L^\infty}\le K_1^{\frac{2}{p_0}}(4 V_\omega)^{\frac{1}{p_0}}\cdot2K_2 V_\omega
\end{equation}
It is easy to see the constant in the right of the above equation depends on $n,A,C_S$, $ C_P,V_\omega,|c_k|$ and $\beta$.

\end{proof}

\subsection{Interior estimates}\label{interior}
For $0\le k\le n-1$, define
\begin{equation}\label{eq:linear-operator}
\mathcal L_{k,\omega}u=\frac{n\ii\partial\bar\partial(u\omega^k)\wedge\omega^{n-k-1}}{\omega^n}.
\end{equation}
The product rule gives
\begin{equation}\label{110}
\mathcal L_{k,\omega}u=\Delta u+\langle B_{k,\omega},du\rangle+\varphi_{k,\omega}u,
\end{equation}
where
\[
\varphi_{k,\omega}=\frac{n\ii\partial\bar\partial\omega^k\wedge\omega^{n-k-1}}{\omega^n}
\]
and the smooth real $1$-form $B_{k,\omega}$ is determined by
\[
\langle B_{k,\omega}, \mathrm{d}u\rangle\omega^n=n\ii\bigl(\partial u\wedge\bar\partial\omega^k-\bar\partial u\wedge\partial\omega^k\bigr)\wedge\omega^{n-k-1}.
\]
For $k=0$, one has $B_{0,\omega}=0$, $\varphi_{0,\omega}=0$, and $\mathcal L_{0,\omega}=\Delta$. Thus $\mathcal L_{k,\omega}$ is a  second-order elliptic operator with principal part $\Delta$. Its coefficients depend smoothly on the metric and its derivatives. Equation \eqref{SGG} can be written as
\[ L_{k,\omega}\rho=nc_k\rho.\]
We record the local regularity statement needed below. All norms on relatively compact subsets in this statement are taken with respect to fixed coordinate charts and a fixed smooth background metric.

\begin{theorem}\label{grade}
Let $U$ be an open subset of a complex manifold of dimension $n$, and let $\omega_t$ be smooth Hermitian metrics converging to a Hermitian metric $\omega_0$ in $C^\infty_{\mathrm{loc}}(U)$ as $t\to0$. Suppose that $|c_t|\le A_0$ and that smooth functions $\rho_t$ satisfy
\[
\mathcal L_{k,\omega_t}\rho_t=nc_t\rho_t\quad\text{on }U,\qquad \|\rho_t\|_{L^\infty(U)}\le M_0.
\]
For every $U'\Subset U$, integer $m\ge0$, and $0<\gamma<1$, there is a constant $C$ such that
\begin{equation}\label{eq:interior-regularity}
\|\rho_t\|_{C^{m,\gamma}(U')}\le CM_0
\end{equation}
for all sufficiently small $t$. The constant depends on $U',U,m,\gamma,n,k,A_0$ and the local ellipticity and coefficient bounds. If $\rho_t\ge a>0$ on $U$, then $\log\rho_t$ also has uniform local $C^{m,\gamma}$-bounds, with constants additionally depending on $a$ and $M_0$.
\end{theorem}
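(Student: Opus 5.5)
Theorem~\ref{grade} is a standard interior regularity statement, and I would prove it by bootstrapping linear elliptic estimates applied to the equation written in non-divergence form. By \eqref{110} the equation reads
\[
\Delta_{\omega_t}\rho_t+\langle B_{k,\omega_t},\mathrm d\rho_t\rangle_{\omega_t}+(\varphi_{k,\omega_t}-nc_t)\rho_t=0 .
\]
In a fixed coordinate chart this is $a_t^{i\bar j}\partial_i\partial_{\bar j}\rho_t+b_t^\ell\partial_\ell\rho_t+\overline{b_t^\ell}\partial_{\bar\ell}\rho_t+d_t\rho_t=0$. Here $a_t^{i\bar j}=g_t^{i\bar j}$, and $b_t,d_t$ are built from $g_t^{-1}$ and from first and second derivatives of $\omega_t$ respectively. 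Viewed as a real second-order operator on $\R^{2n}$, it has principal part $\tfrac14\sum a_t^{i\bar j}(\partial_{x_i}-\ii\partial_{y_i})(\partial_{x_j}+\ii\partial_{y_j})$, which is uniformly elliptic.

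\textbf{Step 1 (uniform coefficient control).} Since $\omega_t\to\omega_0$ in $C^\infty_{\mathrm{loc}}(U)$ and $\omega_0$ is positive, for any $U''\Subset U$ and $t$ small we have $\tfrac12\omega_0\le\omega_t\le2\omega_0$ on $U''$. Every $C^m(U'')$ norm of $a_t,b_t,d_t$ is then bounded uniformly in $t$. Adding $|c_t|\le A_0$, the zeroth-order coefficient $d_t-nc_t$ is also uniformly bounded in every $C^m$.

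\textbf{Step 2 (bootstrap).} Fix a chain $U'\Subset U_m\Subset\cdots\Subset U_0\Subset U''$ and any $p>2n$.
\begin{itemize}
\item Start from $\|\rho_t\|_{L^\infty}\le M_0$. Interior $W^{2,p}$ estimates (Gilbarg--Trudinger Thm.~9.11) need only continuous leading coefficients with a uniform modulus, which Step 1 gives. They yield $\|\rho_t\|_{W^{2,p}(U_0)}\le C(\|\rho_t\|_{L^p}+0)\le CM_0$.
\item Morrey embedding then gives $\rho_t\in C^{1,\gamma}(U_0)$ with norm at most $CM_0$.
\item Interior Schauder estimates (Thm.~6.2) with $C^{\gamma}$ coefficients give a $C^{2,\gamma}(U_1)$ bound.
\item Differentiating the equation, each derivative $\partial\rho_t$ satisfies the same type of equation with a right-hand side involving at most second derivatives of $\rho_t$ times derivatives of the coefficients. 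Iterating the Schauder estimate gives $\|\rho_t\|_{C^{m,\gamma}(U')}\le CM_0$. By Step 1, $C$ depends only on the listed quantities.
\end{itemize}
Linearity in $\rho_t$ is what makes the bound proportional to $M_0$.

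\textbf{Step 3 (the logarithm).} If $\rho_t\ge a$, write $\log\rho_t$ and note that $\partial^\alpha\log\rho_t$ is a polynomial in $\partial^\beta\rho_t$, $|\beta|\le|\alpha|$, divided by powers of $\rho_t\ge a$. The $C^{m,\gamma}$ bounds therefore transfer, with constants depending additionally on $a$ and on the upper bound $M_0$, which controls $|\log\rho_t|\le\max(|\log a|,|\log M_0|)$.

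\textbf{Main obstacle.} The only point needing care is uniformity in $t$. The elliptic constants must depend only on ellipticity bounds and coefficient norms, and not on the particular $t$. This is guaranteed because the convergence $\omega_t\to\omega_0$ is in $C^\infty_{\mathrm{loc}}$ on compact subsets, where Step 1 applies once $t$ is small. Everything else is standard.
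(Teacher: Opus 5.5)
Your proposal is correct and follows essentially the same route as the paper. Both arguments obtain uniform coefficient and ellipticity control from the $C^\infty_{\mathrm{loc}}$ convergence, use interior $W^{2,p}$ estimates from the $L^\infty$ bound, pass to $C^{1,\gamma}$ by Sobolev (Morrey) embedding, bootstrap with interior Schauder estimates, and handle $\log\rho_t$ by the chain rule using $\rho_t\ge a$.

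One minor imprecision: to land in $C^{1,\gamma}$ for the prescribed $\gamma$ you need $1-2n/p\ge\gamma$, as the paper takes, not merely $p>2n$. This is harmless, because $\rho_t$ is smooth, so the interior Schauder estimate alone already bounds $\|\rho_t\|_{C^{2,\gamma}}$ by $\sup|\rho_t|$.
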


\begin{proof}
Choose $U'\Subset U_1\Subset U_2\Subset U$. The operators $\mathcal L_{k,\omega_t}-nc_t$ are uniformly elliptic on $U_2$. Their coefficients have uniform bounds  on $U_2$, by \eqref{110} and the convergence of $\omega_t$. Interior $W^{2,p}$-estimates for linear elliptic equations give
\[
\|\rho_t\|_{W^{2,p}(U_1)}\le C_p\|\rho_t\|_{L^p(U_2)}\le C_p'M_0,\qquad 1<p<\infty;
\]
see \cite[Chapters 6 and 9]{GT}. Choose $p>2n$ so that $1-2n/p>\gamma$. Sobolev embedding gives a uniform $C^{1,\gamma}$-bound on a smaller domain. Interior Schauder estimates and iteration yield \eqref{eq:interior-regularity}. The estimates for $\log\rho_t$ follow by the chain rule and the lower bound $\rho_t\ge a$.
\end{proof}

\section{Existence of singular generalized Gauduchon metrics}\label{ext}

For an irreducible reduced complex space $X$ endowed with a Hermitian metric $\omega$, there exists a resolution $\pi: \widetilde{X} \to X$ of the singularities of $X$. The map $\pi$ is an isomorphism away from the singular locus of $X$. Denote by $D$ the exceptional divisor in $\widetilde{X}$. One can then choose a smooth Hermitian metric $h$ on the line bundle $L = \mathcal{O}_{{\widetilde{X}}}(-D)$ and a constant $K > 0$ such that the real $(1,1)$-form $\theta = \pi^*\omega + K c_1(L, h)$ defines a Hermitian metric on $\widetilde{X}$ \cite[Lemma 2.2]{CMM}. Set 
\[\alpha=\pi^*\omega,\qquad \kappa = K c_1(L, h). \]
For $\epsilon \in (0,1]$, consider a family of Hermitian metrics defined by 
\[\omega_{\epsilon} = \alpha + \epsilon \theta= (1+\epsilon)\alpha + \epsilon\kappa \]
These are Hermitian metrics on $\widetilde X$, and $\omega_\epsilon\to\alpha$ smoothly on $\widetilde X$. The limit $\alpha$ is positive on $\widetilde X\setminus D$. Since $d\kappa=0$,
\begin{equation}\label{eq:derivative-relations}
\partial\omega_\epsilon=(1+\epsilon)\partial\alpha,\qquad \partial\bar\partial\omega_\epsilon=(1+\epsilon)\partial\bar\partial\alpha.
\end{equation}

By the theorem of Fu-Wang-Wang \cite{FWW}, for each $\epsilon$, there exists a unique smooth positive function $\rho_\epsilon$ satisfying the equation
\begin{equation}\label{f_k-Gau}
    \sqrt{-1}\partial\bar{\partial}(\rho_{\epsilon} \omega^{k}_{\epsilon})\wedge\omega_{\epsilon}^{n-k-1}=c_{k,\epsilon}\rho_{\epsilon}\omega_{\epsilon}^{n},\;\;\inf\rho_{\epsilon}=1.
\end{equation} 
By Theorem  \ref{UniSo}, the Sobolev constant and Poincar\'e constant are uniformly bounded for $\epsilon\in(0,1]$.

 The constants $A_\epsilon$ appearing in inequalities \eqref{1.5} and \eqref{1.7} for the metrics $\omega_\epsilon$ are independent of $\epsilon$.

\begin{lemma}\label{uniform2}
    Let $\rho_\epsilon$ be the unique solution and $c_{k,\epsilon}$ be the unique constant satisfying \eqref{f_k-Gau}, then there exists a uniform constant $A$, independent of $\epsilon\in(0,1]$, such that the following inequalities
    \begin{equation}\label{1.501}
    -A\omega_{\epsilon}^{n}\le \sqrt{-1}\partial\bar{\partial}\omega_{\epsilon}^{k}\wedge\omega_{\epsilon}^{n-k-1}\le A\omega_{\epsilon}^{n}
\end{equation}
\begin{equation}\label{1.701}
    -A\omega_{\epsilon}^{n}\le \sqrt{-1}\partial\bar{\partial}\omega_{\epsilon}^{n-k-1}\wedge\omega_{\epsilon}^{k}\le A\omega_{\epsilon}^{n}
\end{equation}
   hold for all $\epsilon\in(0,1]$.   
\end{lemma}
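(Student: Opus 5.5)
The plan is to reduce both inequalities to pointwise bounds of the form $-C\alpha^2\le\Phi\le C\alpha^2$ for a few real $(2,2)$-forms $\Phi$ built from $\alpha=\pi^*\omega$. Such bounds survive wedging with $\omega_\epsilon^{n-2}$, and they then give bounds by $\omega_\epsilon^n$ because $\alpha\le\omega_\epsilon$. (The functions $\rho_\epsilon$ play no role; the statement is purely about the metrics.)

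\emph{Step 1: expand and reduce to $\alpha$.} For $m\ge 1$,
\[
\ii\partial\bar\partial\omega_\epsilon^m=m\,\ii\partial\bar\partial\omega_\epsilon\wedge\omega_\epsilon^{m-1}+m(m-1)\,\ii\partial\omega_\epsilon\wedge\bar\partial\omega_\epsilon\wedge\omega_\epsilon^{m-2}
\]
(up to a sign convention on the second term). By \eqref{eq:derivative-relations}, $\partial\omega_\epsilon=(1+\epsilon)\partial\alpha$ and $\partial\bar\partial\omega_\epsilon=(1+\epsilon)\partial\bar\partial\alpha$. Hence both $\ii\partial\bar\partial\omega_\epsilon^k\wedge\omega_\epsilon^{n-k-1}$ and $\ii\partial\bar\partial\omega_\epsilon^{n-k-1}\wedge\omega_\epsilon^k$ are combinations, with coefficients bounded by a constant $C(n)$ (since $1+\epsilon\le 2$), of
\[
\Phi_1\wedge\omega_\epsilon^{n-2}\quad\text{and}\quad\Phi_2\wedge\omega_\epsilon^{n-3},\qquad \Phi_1=\ii\partial\bar\partial\alpha,\quad \Phi_2=\ii\partial\alpha\wedge\bar\partial\alpha.
\]
It therefore suffices to show $-C\alpha^2\le\Phi_j\le C\alpha^2$ in the sense of (strong) positivity of real $(2,2)$-forms on $\widetilde X$, with $C$ independent of $\epsilon$. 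For $\Phi_2$ we use $\Phi_2\wedge\omega_\epsilon^{n-3}=(\Phi_2\wedge\omega_\epsilon)\wedge\omega_\epsilon^{n-3}$ type rearrangements, or simply treat $\Phi_2\wedge\alpha^0$ with one fewer factor. The key point is that these forms do not depend on $\epsilon$ at all.

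\emph{Step 2: the local ambient bound, which is the main obstacle.} The difficulty is that $\alpha$ degenerates along $D$, so one cannot compare $\Phi_j$ with $\theta^2$ directly; the comparison must be made with $\alpha^2$ itself. I would use Definition~\ref{Herm}. Cover $X$ by finitely many charts $j:U\hookrightarrow V\subset\C^N$ with smooth Hermitian $\Omega$ on $V$ and $\omega=j^*\Omega$, shrinking so that everything sits in compact subsets of $V$. On $V$, the real $(2,2)$-forms $\ii\partial\bar\partial\Omega$ and $\ii\partial\Omega\wedge\bar\partial\Omega$ are smooth. Moreover $\Omega^2$ lies in the interior of the strongly positive cone of $(2,2)$-forms. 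So by compactness there is a $C$ with $\Phi^V_j+C\Omega^2$ and $C\Omega^2-\Phi^V_j$ strongly positive on the relevant compact set. Strong positivity is preserved under pullback by the holomorphic map $j\circ\pi$. Since $\alpha=(j\circ\pi)^*\Omega$ and $\Phi_j=(j\circ\pi)^*\Phi_j^V$, we get $-C\alpha^2\le\Phi_j\le C\alpha^2$ on $\pi^{-1}(U)$, and by finiteness of the cover on all of $\widetilde X$.

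\emph{Step 3: conclude.} Wedging a strongly positive $(2,2)$-form with the positive form $\omega_\epsilon^{n-2}$ yields a nonnegative top form, so
\[
|\Phi_1\wedge\omega_\epsilon^{n-2}|\le C\alpha^2\wedge\omega_\epsilon^{n-2}.
\]
The $\Phi_2$-terms are handled the same way. Finally, $\omega_\epsilon-\alpha=\epsilon\theta\ge 0$ and $\alpha\ge 0$ give $\alpha^2\wedge\omega_\epsilon^{n-2}\le\omega_\epsilon^n$, because expanding $\omega_\epsilon^n=(\alpha+\epsilon\theta)^2\wedge\omega_\epsilon^{n-2}$ produces only nonnegative extra terms. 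Summing the finitely many terms with their $C(n)$ coefficients gives a constant $A$ independent of $\epsilon\in(0,1]$, for which \eqref{1.501} and \eqref{1.701} both hold.
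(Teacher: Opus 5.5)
Your argument is essentially the paper's. Both rest on three facts: the ambient smooth extension $\Omega$; that $\ii\partial\bar\partial\alpha$ and $\partial\alpha$ are holomorphic pullbacks of bounded ambient forms, which stays valid across $D$ where $d(j\circ\pi)$ degenerates; and the inequality $\alpha\le\omega_\epsilon$. The paper turns these into pointwise bounds $|\ii\partial\bar\partial\alpha|_{\omega_\epsilon},|\partial\alpha|_{\omega_\epsilon}\le C$, because $d(j\circ\pi)$ is a contraction for $\omega_\epsilon$. You instead prove $\epsilon$-independent cone inequalities $\pm\Phi\le C\alpha^p$ and only then use $\alpha^p\wedge\omega_\epsilon^{n-p}\le\omega_\epsilon^n$. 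That is a valid alternative bookkeeping.

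One step needs correcting. $\Phi_2=\ii\partial\alpha\wedge\bar\partial\alpha$ is a real $(3,3)$-form, not a $(2,2)$-form, so it cannot be compared with $\alpha^2$. Discard the sentence about rearranging $\Phi_2\wedge\omega_\epsilon^{n-3}$ or using $\alpha^0$; it does not make sense.

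The fix is your own argument one degree higher:
\begin{itemize}
\item This term only occurs when $n\ge3$, so $N\ge n\ge 3$ and $\Omega^3$ lies in the interior of the strongly positive cone of $(3,3)$-forms, for the same reason as $\Omega^2$.
\item Hence $C\Omega^3\pm\ii\partial\Omega\wedge\bar\partial\Omega$ is strongly positive on the relevant compact set, and pulling back by $j\circ\pi$ gives $\pm\Phi_2\le C\alpha^3$.
\item Wedging with $\omega_\epsilon^{n-3}$ and expanding $\omega_\epsilon^3=(\alpha+\epsilon\theta)^3$ gives $|\Phi_2\wedge\omega_\epsilon^{n-3}|\le C\alpha^3\wedge\omega_\epsilon^{n-3}\le C\omega_\epsilon^n$.
\end{itemize}

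A minor point: take the ambient forms $\ii\partial\bar\partial\Omega$ and $\ii\partial\Omega\wedge\bar\partial\Omega$ on the open set of $\C^N$ containing $V$, not on $V$ itself, which may be singular.
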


\begin{proof}
Set $\alpha=\pi^*\omega$.
Choose finitely many relatively compact local embeddings covering $X$, and let $\Omega$ denote a smooth ambient Hermitian extension of $\omega$ on one such chart. Its derivatives are bounded on a neighborhood of the relevant compact set. On the inverse image of this chart, write $\Phi=j\circ\pi$. Then $\alpha=\Phi^*\Omega$ and $\alpha\le\omega_\epsilon$. Thus the norm of $d\Phi$ from $(T\widetilde X,\omega_\epsilon)$ to the ambient Hermitian tangent space is at most one. Pulling back the bounded ambient tensors gives
\[
|\ii\partial\bar\partial\alpha|_{\omega_\epsilon}\le C,\qquad |\partial\alpha|_{\omega_\epsilon}\le C
\]
with a uniform constant. These inequalities remain valid on $D$, where $d\Phi$ need not be injective. The finite cover gives one constant for all of $\widetilde X$.

For $k=0$, the middle form in \eqref{1.501} is zero. For $k=1$, it equals
\[
(1+\epsilon)\ii\partial\bar\partial\alpha\wedge\omega_\epsilon^{n-2}.
\]
For $2\le k\le n-1$, the product rule and \eqref{1.501} give
\[
\begin{aligned}
\ii\partial\bar\partial\omega_\epsilon^k\wedge\omega_\epsilon^{n-k-1}={}&k(1+\epsilon)\ii\partial\bar\partial\alpha\wedge\omega_\epsilon^{n-2}\\
&+k(k-1)(1+\epsilon)^2\ii\partial\alpha\wedge\bar\partial\alpha\wedge\omega_\epsilon^{n-3}.
\end{aligned}
\]
The last formula is used only when $k\ge2$, so $n\ge3$. The tensor bounds, together with $1+\epsilon\le2$, bound the absolute value of the coefficient of each top-degree form relative to $\omega_\epsilon^n$. This proves \eqref{1.501}, including the case $n=2$.
\end{proof}

Under the condition: \eqref{1.501} and \eqref{1.701}, we proved  that $|c_{k,\epsilon}|$ is uniformly bounded for all $\epsilon \in (0,1]$.

\begin{lemma}\label{uniform}
    If the following inequalities hold for all $\epsilon\in(0,1]$
\begin{equation}\label{4.8}
-A\omega^{n}_{\epsilon} \le \sqrt{-1}\partial\bar{\partial}\omega^{n-k-1}_{\epsilon} \wedge \omega^{k}_{\epsilon} \le A\omega^{n}_{\epsilon},
\end{equation}
then
\begin{equation}\label{cst}
    |c_{k,\epsilon}|\le A
\end{equation}
     
\end{lemma}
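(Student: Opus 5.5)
Integrate the equation against a suitable test form so that all second-order terms on $\rho_\epsilon$ disappear, leaving only the adjoint curvature expression controlled by \eqref{4.8}.

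\textbf{Step 1: integrate the equation over $\widetilde X$.} Since $\rho_\epsilon>0$, the right-hand side of \eqref{f_k-Gau} integrates to $c_{k,\epsilon}\int_{\widetilde X}\rho_\epsilon\omega_\epsilon^n$.

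\textbf{Step 2: move $\ii\partial\bar\partial$ onto the other factor.} On the left, integrate by parts twice (Stokes' theorem on the compact manifold $\widetilde X$, no boundary terms). This gives
\[
\int_{\widetilde X}\ii\partial\bar\partial(\rho_\epsilon\omega_\epsilon^k)\wedge\omega_\epsilon^{n-k-1}
=\int_{\widetilde X}\rho_\epsilon\,\omega_\epsilon^k\wedge\ii\partial\bar\partial\omega_\epsilon^{n-k-1}.
\]
Concretely, $d\bigl(\ii\bar\partial(\rho\omega^k)\wedge\omega^{n-k-1}\bigr)$ and $d\bigl(\ii\rho\omega^k\wedge\partial\omega^{n-k-1}\bigr)$ are exact, and by bidegree counting only the needed components survive. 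Even powers of the real form $\omega_\epsilon^k$ commute, so signs need care but come out as stated; this is the case $f\equiv 1$, $F(x)=x$, $s\equiv0$ of Lemma~\ref{FL}. Indeed, taking $f\equiv1$ there gives $0=\int F(\rho)\,\omega^k\wedge\ii\partial\bar\partial\omega^{n-k-1}-c_k\int\rho\,\omega^n$, which is exactly the desired identity. So I would simply invoke Lemma~\ref{FL} with $f=1$.

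\textbf{Step 3: apply the hypothesis.} Combining Steps 1 and 2 yields
\[
c_{k,\epsilon}\int_{\widetilde X}\rho_\epsilon\omega_\epsilon^n=\int_{\widetilde X}\rho_\epsilon\,\ii\partial\bar\partial\omega_\epsilon^{n-k-1}\wedge\omega_\epsilon^k.
\]
Because $\rho_\epsilon>0$, we may multiply \eqref{4.8} pointwise by $\rho_\epsilon$ and integrate, which bounds the right side in absolute value by $A\int\rho_\epsilon\omega_\epsilon^n$. Dividing by the positive quantity $\int\rho_\epsilon\omega_\epsilon^n$ gives $|c_{k,\epsilon}|\le A$.

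\textbf{Main obstacle.} The only delicate point is the sign and bookkeeping in the double integration by parts. Relying on Lemma~\ref{FL} with $f\equiv1$ avoids it, since that lemma already packages the computation and the $f'$ and $s$ terms vanish identically.
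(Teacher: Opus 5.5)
Your proof is correct and follows the paper's argument. You integrate the equation, move $\ii\partial\bar\partial$ onto $\omega_\epsilon^{n-k-1}$ by Stokes' theorem, bound the integrand pointwise using $\rho_\epsilon>0$ and \eqref{4.8}, and divide by $\int\rho_\epsilon\omega_\epsilon^n$. Obtaining the integration-by-parts identity as the case $f\equiv1$, $F(x)=x$, $s\equiv0$ of Lemma~\ref{FL} is a valid shortcut to the identity the paper writes down directly.
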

\begin{proof}
     The constant $c_{k,\epsilon}$ is given by
\begin{equation}\label{4.10}
    \begin{split}
        c_{k,\epsilon}&=\frac{\int_{X}\sqrt{-1}\partial\bar{\partial}(\rho_{\epsilon}\omega^{k}_{\epsilon})\wedge\omega^{n-k-1}_{\epsilon}}{\int_{X}\rho_{\epsilon}\omega^{n}_{\epsilon}}\\&
=\frac{\int_{X}\rho_{\epsilon}\sqrt{-1}\partial\bar{\partial}\omega^{n-k-1}_{\epsilon}\wedge\omega^{k}_{\epsilon}}{\int_{X}\rho_{\epsilon}\omega^{n}_{\epsilon}}
    \end{split}
\end{equation}
Using \eqref{4.8} and \eqref{4.10}, we  obtain
\begin{equation}\label{4.15}
    \begin{split}
        |c_{k,\epsilon}|\le A.
    \end{split}
\end{equation}
for $\epsilon\in(0,1]$.
\end{proof}

\begin{proof}[Proof of Theorem \ref{MT} ]
We first prove the existence of a solution to equation \eqref{SGG}. On every relatively compact open subset of $\widetilde X\setminus D$, the metrics $\omega_\epsilon$ converge smoothly to the Hermitian metric $\pi^*\omega$. 
The volumes satisfy
\begin{equation}\label{eq:volume-bounds}
0<V_0:=\int_{\widetilde X}(\pi^{*}\omega)^n=\int_{X_{\reg}}\omega^n\le\int_{\widetilde X}\omega_\epsilon^n\le\int_{\widetilde X}\omega_1^n=:V_1.
\end{equation}
Fix $1<\beta<n/(n-1)$. By Theorem~\ref{UniSo}, the Sobolev and Poincar\'e constants are uniformly bounded. The constant $c_{k,\epsilon}$ is uniformly bounded by Lemma \ref{uniform}. Applying Theorem \ref{C0}, we obtain   the uniform $L^{\infty}$ estimate 
\begin{equation}\label{eq:uniform-rho-bound}
1\le\rho_\epsilon\le C
\end{equation}
with a constant independent of $\epsilon$.  Apply Theorem~\ref{grade} to equation \eqref{par_k_gau}, we have the interior higher order estimates.

 Using a diagonal argument, we infer that there is a subsequence $\rho_{\epsilon}$ convergencing in $C^{\infty}_{\mathrm{loc}}(\widetilde{X}\backslash D)$ to a function $\widetilde\rho\in C^{\infty}_{\mathrm{loc}}(\widetilde{X}\backslash D)\cap L^{\infty}_{\mathrm{loc}}(\widetilde{X}\backslash D)$ which is also bounded away from zero and a subsequence $c_{k,\epsilon}\to c_k$. Take $\epsilon\to 0$ in \eqref{f_k-Gau}, we obtain
\begin{equation}\label{s_k-Gau}
    \sqrt{-1}\partial\bar{\partial}(\widetilde\rho (\pi^{*}\omega)^{k})\wedge\omega^{n-k-1}=c_{k}\widetilde\rho(\pi^{*}\omega)^{n},\quad\text{on }\widetilde X\setminus D.
\end{equation}
The isomorphism $\pi:\widetilde X\setminus D\to X_{\reg}$ transfers $\widetilde\rho$ to a function $\rho$ satisfying \eqref{SGG} and $1\le\rho\le C$. The normalization of the approximating functions need not pass to their infimum on the regular locus. If the normalization $\inf_{X_{\reg}}\rho=1$ is desired, divide the limit by its positive infimum. This does not change the equation or $c_k$.


We recall the existence of cut-off function on compact complex varieties \cite[Lemma 3.2]{Pan1}
\begin{lemma}\label{cut}
    Let $X$ be a compact irreducible reduced complex space and $\omega$ be a Hermitian metric on $X$. Then there exist cut-off functions $\eta_{\epsilon}\in C_{c}^{\infty}(X_{\mathrm{reg}})$ for  any $\epsilon >0$ satisfying the following properties 
    \begin{enumerate}
        \item The family $\eta_\epsilon$  increase to the characteristic function of $X_{\mathrm{reg}}$ when $\epsilon$ decreases to zero.
        \item $\lim_{\epsilon\to0}\int_{X}|\sqrt{-1}\partial\bar{\partial}\eta_{\epsilon}\wedge\omega^{n-1}|=0$
          \item $\lim_{\epsilon\to0}\int_{X}\sqrt{-1}\partial\eta_{\epsilon}\wedge\bar{\partial}\eta_{\epsilon}\wedge\omega^{n-1}=0$.
    \end{enumerate}
\end{lemma}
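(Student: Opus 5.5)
The plan is to build $\eta_\epsilon$ from a global function that blows up logarithmically along $X_{\mathrm{sing}}$. Cover $X$ by finitely many local embeddings $j_\alpha:U_\alpha\hookrightarrow V_\alpha\subset\C^{N_\alpha}$. Choose holomorphic functions $f_{\alpha,1},\dots,f_{\alpha,m}$ on $V_\alpha$ whose common zero set meets $j_\alpha(U_\alpha)$ exactly in $j_\alpha(X_{\mathrm{sing}}\cap U_\alpha)$. Take a partition of unity $\{\chi_\alpha\}$ subordinate to the cover, and set
\[
\psi=\sum_\alpha\chi_\alpha\log\Bigl(\sum_i|f_{\alpha,i}|^2\Bigr).
\]
Then $\psi$ is smooth on $X_{\mathrm{reg}}$, tends to $-\infty$ exactly at $X_{\mathrm{sing}}$, and is bounded above. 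The key local fact is that $\ii\partial\bar\partial\log\sum_i|f_{\alpha,i}|^2\ge0$. Consequently $\ii\partial\bar\partial\psi\ge -C\Omega$ on each chart, where the error comes from derivatives of $\chi_\alpha$ and from comparing the logarithms of different charts. Any two such log functions differ by a bounded function on overlaps, because the ideals they generate have the same radical; I will assume the generating sets are chosen so that they define comparable ideals, so the difference is locally bounded.

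Next, fix a smooth cutoff $\chi:\R\to[0,1]$ with $\chi=0$ on $(-\infty,-2]$ and $\chi=1$ on $[-1,\infty)$. Set
\[
\eta_\epsilon=\chi\Bigl(\epsilon\log(-\psi+C_0)\Bigr)
\]
or a similar double-logarithmic cutoff. Adding the constant $C_0$ makes the inner function positive. This $\eta_\epsilon$ has compact support in $X_{\mathrm{reg}}$ and increases to $1$ as $\epsilon\downarrow0$, provided $\chi$ is monotone, which gives property 1.

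Write $u=\log(-\psi+C_0)$. The gradient property is a direct computation:
\[
\ii\partial\eta_\epsilon\wedge\bar\partial\eta_\epsilon=\epsilon^2\chi'^2\,\ii\partial u\wedge\bar\partial u,
\qquad
\ii\partial u\wedge\bar\partial u=\frac{\ii\partial\psi\wedge\bar\partial\psi}{(-\psi+C_0)^2}.
\]
It remains to bound $\int_X \ii\partial u\wedge\bar\partial u\wedge\omega^{n-1}$ uniformly on the support of $\chi'$. This follows from the identity
\[
\ii\partial\bar\partial(-u)=\frac{\ii\partial\bar\partial\psi}{-\psi+C_0}+\ii\partial u\wedge\bar\partial u
\]
together with Stokes' theorem, after cutting off at a fixed level and letting the level go to $-\infty$. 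Stokes converts the integral of $\ii\partial u\wedge\bar\partial u\wedge\omega^{n-1}$ into terms involving $\ii\partial\bar\partial\omega^{n-1}$ and $\ii\partial\bar\partial\psi\wedge\omega^{n-1}$. On the regular locus these are controlled by $\int u\,\omega^n<\infty$, which is integrable since $\psi$ is only logarithmically singular, and by the finite mass of $\ii\partial\bar\partial\psi\wedge\omega^{n-1}$. The finiteness of that mass is a Lelong-type statement: psh functions with analytic singularities have locally finite Monge–Ampère-type masses near $X_{\mathrm{sing}}$, which holds on the ambient space $V_\alpha$. Hence the integral in property 3 is $O(\epsilon^2)\to0$.

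For property 2,
\[
\ii\partial\bar\partial\eta_\epsilon=\epsilon\chi'\,\ii\partial\bar\partial u+\epsilon^2\chi''\,\ii\partial u\wedge\bar\partial u.
\]
The second term is handled exactly as in property 3. For the first, write $\ii\partial\bar\partial u=-\ii\partial u\wedge\bar\partial u-\ii\partial\bar\partial\psi/(-\psi+C_0)$. The absolute mass of $\ii\partial\bar\partial\psi\wedge\omega^{n-1}$ is bounded by its positive part plus $C\omega^n$, and the positive part has finite mass as above. So the first term contributes $O(\epsilon)$, and property 2 follows.

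The main obstacle I expect is the uniform finiteness of $\int_{X_{\mathrm{reg}}}\ii\partial\psi\wedge\bar\partial\psi\,(-\psi+C_0)^{-2}\wedge\omega^{n-1}$, that is, the finite-energy claim. My first attempt is the Stokes/Lelong argument in the ambient charts. Should it fail, I would fall back on pulling back to a resolution $\pi:\widetilde X\to X$ where $\pi^*\psi$ has divisorial singularities, locally $\sum a_i\log|z_i|^2$ plus a smooth term. There the estimate reduces to the explicit one-variable integrability of $|dz|^2/(|z|^2\log^2|z|)$, using $\pi^*\omega\le\omega_1$ for a Hermitian metric $\omega_1$ on $\widetilde X$.
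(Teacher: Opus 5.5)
\textbf{The overall strategy is sound, but the primary argument has a sign error and two unjustified steps.} The paper does not prove this lemma; it quotes it from \cite[Lemma 3.2]{Pan1}. Your route is the standard one: a double-logarithmic cutoff of a function $\psi$ with logarithmic poles exactly along $X_{\mathrm{sing}}$, with properties 2 and 3 reduced to a finite energy $\int_{X_{\reg}}\ii\partial u\wedge\bar\partial u\wedge\omega^{n-1}$ and a finite mass of $\ii\partial\bar\partial\psi\wedge\omega^{n-1}$. Your fallback is essentially complete, provided the resolution principalizes the ideal of $X_{\mathrm{sing}}$ and is an isomorphism over $X_{\reg}$. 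There $\pi^*\psi=\log|g|^2+h$ locally with $h$ smooth, so $\ii\partial\bar\partial\pi^*\psi$ is bounded by $C\omega_1$ on $\widetilde X\setminus E$. The energy then reduces to $\int_0^{1/2}dr/(r\log^2r)<\infty$ via $\pi^*\omega\le C\omega_1$.

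\textbf{The sign error.} The function $u=\log(-\psi+C_0)$ tends to $+\infty$ at $X_{\mathrm{sing}}$ and is $\ge0$ once $C_0$ is large. With your $\chi$, the function $\chi(\epsilon u)$ is therefore identically $1$. You need $\eta_\epsilon=\chi(-\epsilon u)$, which is supported in $\{u\le2/\epsilon\}\Subset X_{\reg}$ and increases as $\epsilon\downarrow0$. Your derivative formulas change only by signs.

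\textbf{The gluing.} Choose the $f_{\alpha,i}$ to generate the ideal sheaf of $X_{\mathrm{sing}}$, so that the ratios $|f_\alpha|/|f_\beta|$ are bounded on overlaps. Bounded differences $\psi_\alpha-\psi_\beta$ control the terms $(\psi_\alpha-\psi_\beta)\ii\partial\bar\partial\chi_\alpha$. They do not control the cross terms $2\,\mathrm{Re}\,\ii\partial\chi_\alpha\wedge\bar\partial(\psi_\alpha-\psi_\beta)$ in $\ii\partial\bar\partial\sum_\alpha\chi_\alpha\psi_\alpha$. Such a difference can have gradient of size $1/|f|$ even when the ideals coincide, e.g.\ $\log(|z_1|^2+4|z_2|^2)-\log(|z_1|^2+|z_2|^2)$. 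So $\ii\partial\bar\partial\psi\ge-C\omega$ does not follow from what you wrote. Glue inside the logarithm instead, $\psi=\log\sum_\alpha\chi_\alpha^2\sum_i|f_{\alpha,i}|^2$. Convexity of $\log\sum_\alpha e^{v_\alpha}$ then gives $\ii\partial\bar\partial\psi\ge-C\omega$ on $X_{\reg}$ from the bounded ratios alone.

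\textbf{The energy step.} Moving both derivatives off $u$ leaves terms such as $\int u\,\ii\partial\bar\partial\eta_\delta\wedge\omega^{n-1}$. Since $u\approx1/\delta$ on the transition region, these are as large as the energy there, so ``letting the level go to $-\infty$'' is circular as stated. Also, your Lelong-type mass statement concerns ambient psh functions and does not directly cover a glued $\psi$.

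\textbf{Fix.} Both points are settled by integrating by parts once against $\eta_\delta=\chi(-\delta u)$:
\[
\begin{aligned}
&\int\eta_\delta\,\ii\partial u\wedge\bar\partial u\wedge\omega^{n-1}+\delta\int\chi'(-\delta u)\,\ii\partial u\wedge\bar\partial u\wedge\omega^{n-1}\\
&\qquad=-\int\eta_\delta\,\ii\bar\partial u\wedge\partial\omega^{n-1}-\int\eta_\delta\,\frac{\ii\partial\bar\partial\psi\wedge\omega^{n-1}}{-\psi+C_0}.
\end{aligned}
\]
\begin{itemize}
\item The second term on the left is nonnegative because $\chi'\ge0$.
\item The first term on the right is at most $C\bigl(\int\eta_\delta|\partial u|^2\omega^n\bigr)^{1/2}$, since $|\partial\omega^{n-1}|_\omega$ is bounded on $X_{\reg}$ by the ambient extension.
\item The last term is at most $C\int_X\omega^n$, by $\ii\partial\bar\partial\psi\ge-C\omega$ and $-\psi+C_0\ge1$.
\end{itemize}
Absorbing gives the energy bound uniformly in $\delta$. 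Read the other way, the same identity bounds $\int_{X_{\reg}}(\ii\partial\bar\partial\psi+C\omega)\wedge\omega^{n-1}/(-\psi+C_0)$. That weighted mass is exactly what your estimate of $\ii\partial\bar\partial u$ in property 2 needs, so no unweighted mass bound is required.
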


These functions enable us to carry out integration by parts even on compact complex varieties. For $1\le k \le n-2$, 
    Let $\rho$ be
    a positive  function  $\rho\in C^{\infty}(X_{\mathrm{reg}})\cap L^{\infty}(X_{\mathrm{reg}})$ , and let $c_k$ be a constant satisfying 
    \begin{equation}\label{1.42}
    \sqrt{-1}\partial\bar{\partial}(\rho\omega^{k})\wedge\omega^{n-k-1}=c_{k}\rho\omega^{n}
\end{equation}
on $X_{\mathrm{reg}}$. Let $\eta_\epsilon$ be the cut-off functions given in Lemma \ref{cut}. Integrating both sides of \eqref{1.42} yields
\begin{equation}\label{1.41}
    \begin{split}
        c_{k}=\frac{\int_{X}\eta_{\epsilon}\sqrt{-1}\partial\bar{\partial}(\rho\omega^{k})\wedge\omega^{n-k-1}}{\int_{X}\rho\eta_{\epsilon}\omega^{n}}
    \end{split}
\end{equation}
For $k=n-1$, using \eqref{1.41}, By Stokes' formula and direct computations, we know that 
\begin{equation}
    \begin{split}
        c_{n-1}&=\frac{\int_{X}\eta_{\epsilon}\sqrt{-1}\partial\bar{\partial}(\rho\omega^{n-1})}{\int_{X}\rho\eta_{\epsilon}\omega^{n}}=\frac{\int_{X}\rho\sqrt{-1}\partial\bar{\partial}\eta_{\epsilon}\wedge\omega^{n-1}}{\int_{X}\rho\eta_{\epsilon}\omega^{n}}
    \end{split}
\end{equation}
As $\rho$ is bounded, Lemma \ref{cut}  implies
\begin{equation}
    \lim_{\epsilon\to 0} \int_{X}\rho\sqrt{-1}\partial\bar{\partial}\eta_{\epsilon}\wedge\omega^{n-1}=0.
\end{equation}
Hence, $c_{n-1}=0$. If $\omega$ is a K\"ahler metric, i.e., $\mathrm{d}\omega=0$ on $X_{\mathrm{reg}}$, then from \eqref{1.41}, the closedness of $\omega$, and Stokes' theorem, we conclude that
\begin{equation}
    c_k= 0.
\end{equation}
for $1\le k \le n-1$. Mutiple $\rho\eta_\epsilon$ both sides of \eqref{1.42} and Integrating both sides of \eqref{1.42} yeilds
\begin{equation}
    \begin{split}
0&=\int_{X}\rho\eta_\epsilon\sqrt{-1}\partial\bar{\partial}(\rho\omega^{k})\wedge\omega^{n-k-1}\\&=-\int_{X}\eta_\epsilon\sqrt{-1}\partial\rho\wedge\bar{\partial}\rho\wedge\omega^{n-1}-\frac{1}{2}\int_{X}\sqrt{-1}\partial\eta_\epsilon\wedge\bar{\partial}\rho^{2}\wedge\omega^{n-1}\\&
=-\int_{X}\eta_\epsilon\sqrt{-1}\partial\rho\wedge\bar{\partial}\rho\wedge\omega^{n-1}+\frac{1}{2}\int_{X}\rho^{2}\sqrt{-1}\partial\bar{\partial}\eta_\epsilon\wedge\omega^{n-1}\\&
=-n\int_{X}\eta_\epsilon|\partial\rho|^{2}\omega^{n}+\frac{1}{2}\int_{X}\rho^{2}\sqrt{-1}\partial\bar{\partial}\eta_\epsilon\wedge\omega^{n-1}
    \end{split}
\end{equation}
By Lemma \ref{cut}, the second term on the right-hand side of the last equality converges to zero as $\epsilon$ tends to zero. Hence, we have
\begin{equation}
    \int_{X}|\mathrm{d}\rho|^{2}\omega^{n}=0.
\end{equation}
Since $X_{\mathrm{reg}}$ is connected by the irreducible assumption, $\rho$ is a positive constant.
\end{proof}

 Finally, we will give explicit singular non-K\"ahler examples admitting bounded $1$-Gauduchon metrics.

\begin{example}[A singular non-K\"ahler threefold]\label{ex:threefold}
Let
\[
H=(\C^2\setminus\{0\})/\langle(z_1,z_2)\mapsto(2z_1,2z_2)\rangle
\]
be the standard Hopf surface. It is diffeomorphic to $S^3\times S^1$, so $b_2(H)=0$. In particular, it is not K\"ahler. Indeed, a K\"ahler form on $H$ would be exact and would have strictly positive square integral, contradicting Stokes' theorem. Choose a Gauduchon metric $\omega_H$ on $H$. Since $\dim_\C H=2$, it satisfies $\partial\bar\partial\omega_H=0$.

Let $C\subset\mathbb P^2$ be the irreducible nodal cubic
\[
y^2z=x^2(x+z),
\]
and let $p=[0:0:1]$ be its node. The affine equation at $p$ is $y^2=x^2(1+x)$, whose quadratic part has two distinct linear factors. The other points are smooth. Let $\omega_C$ be the restriction of the Fubini--Study form to $C$. Then
\[
X=H\times C,\qquad \omega=\operatorname{pr}_H^*\omega_H+\operatorname{pr}_C^*\omega_C
\]
defines an irreducible reduced compact Hermitian threefold. Its singular locus is $H\times\{p\}$. The metric is the restriction of the smooth Hermitian form $\operatorname{pr}_H^*\omega_H+\operatorname{pr}_{\mathbb P^2}^*\omega_{\mathrm{FS}}$ on $H\times\mathbb P^2$. On $X_{\reg}$,
\[
\partial\bar\partial\omega=0.
\]
Thus $\omega$ is a bounded $1$-Gauduchon metric, with conformal factor $\rho=1$. The space $X$ is non-K\"ahler, since the restriction of a K\"ahler metric to $H\times\{q\}$, for $q\in C_{\reg}$, would be a K\"ahler metric on $H$. This example is not normal, and its singularities are not isolated.
\end{example}

\begin{example}[A normal example with quotient singularities]\label{ex:quotient}
Let
\[
Q=\{[x_0:x_1:x_2:x_3]\in\mathbb P^3:x_0x_1=x_2^2\}
\]
be the quadric cone. Its only singular point is $v=[0:0:0:1]$. The affine germ at $v$ is $ab=c^2$, which is the quotient of $\C^2$ by $(u,w)\mapsto(-u,-w)$, since
\[
\C[u,w]^{\{\pm1\}}=\C[u^2,w^2,uw]\cong\C[a,b,c]/(ab-c^2).
\]
In particular, $Q$ is normal. Let $\omega_Q$ be the restriction of the Fubini--Study metric. Using the Hopf surface and metric from Example~\ref{ex:threefold}, set
\[
X=H\times Q,\qquad \omega=\operatorname{pr}_H^*\omega_H+\operatorname{pr}_Q^*\omega_Q.
\]
Then $X$ is a normal compact non-K\"ahler fourfold with singular locus $H\times\{v\}$. Its local singular model is $\C^2\times(\C^2/\{\pm1\})$. Again $\partial\bar\partial\omega=0$ on $X_{\reg}$, so $\omega$ is a bounded $1$-Gauduchon metric. Here $\omega$ is the restriction of a smooth Hermitian form on $H\times\mathbb P^3$.
\end{example}

\section{Uniqueness}\label{uniq}

In this section, we will prove the uniqueness of the constant $c_k$ and $\rho$  up to multiplication by a positive constant in the equation \eqref{SGG}. The generalized Gauduchon equation \eqref{SGG} can be written as 
\begin{equation}\label{par_k_gau}
    \begin{split}
        (\mathcal L_{k,\omega}-nc_{k})\rho_\epsilon=0.
    \end{split}
\end{equation}
We write $\mathcal L_k=\mathcal L_{k,\omega}$, $B_k=B_{k,\omega}$, and $\varphi_k=\varphi_{k,\omega}$ for $1\le k\le n-1$. All norms are taken with respect to the given Hermitian metric $\omega$ on a compact complex space $X$. Integrations on a compact complex space are taken over its regular part, where the measure is $\omega^n$.

\begin{lemma}\label{lem:formal-adjoint}
For every $1\le k\le n-1$, the coefficients $B_k$ and $\varphi_k$ are bounded on $X_\reg$ with respect to $\omega$, we have
\[ \int_X g\mathcal L_k f\,\omega^n=\int_X f\mathcal L_{n-k-1}g\,\omega^n\]
whenever $f,g$ are smooth  functions and at least one of them has compact support in $X_\reg$. Thus the formal adjoint of $\mathcal L_k$  is $\mathcal L_{n-k-1}$, i. e., $\mathcal L_k^*=\mathcal L_{n-k-1}$.
\end{lemma}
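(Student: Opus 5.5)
The proof has two parts: boundedness of the coefficients, and the adjoint identity obtained by moving $\ii\partial\bar\partial$ across with Stokes' theorem.

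\textbf{Boundedness of the coefficients.} I would argue exactly as in the proof of Lemma~\ref{uniform2}, but directly on $X$ with $\epsilon=0$. Cover $X$ by finitely many relatively compact local embeddings $j:U\hookrightarrow V\subset\C^N$ on which $\omega=j^*\Omega$ for a smooth ambient Hermitian form $\Omega$. The forms $\partial\Omega$ and $\ii\partial\bar\partial\Omega$ are bounded tensors on a neighborhood of the relevant compact set. Since $j^*\Omega=\omega$, the differential $dj$ is an isometry from $(TX_\reg,\omega)$ into $(\C^N,\Omega)$. Pulling back therefore gives $|\partial\omega|_\omega\le C$ and $|\ii\partial\bar\partial\omega|_\omega\le C$ on $U\cap X_\reg$. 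Expanding $\partial\omega^k=k\,\partial\omega\wedge\omega^{k-1}$ and $\ii\partial\bar\partial\omega^k$ by the product rule, as in Lemma~\ref{uniform2}, expresses $\varphi_k$ and the components of $B_k$ as contractions of these bounded tensors with $\omega$ and $\omega^{-1}$. Hence $\varphi_k$ and $B_k$ are bounded on $X_\reg$, uniformly across the finite cover.

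\textbf{The adjoint identity.} Assume $f$ or $g$ has compact support in $X_\reg$. Then $g\,\ii\partial\bar\partial(f\omega^k)\wedge\omega^{n-k-1}$ and all forms produced below have compact support in the complex manifold $X_\reg$, so Stokes' theorem applies with no boundary terms. I would integrate by parts twice, moving $\partial$ and then $\bar\partial$ from $f\omega^k$ onto $g\omega^{n-k-1}$:
\[
\int_X g\,\ii\partial\bar\partial(f\omega^k)\wedge\omega^{n-k-1}
=\int_X f\omega^k\wedge\ii\partial\bar\partial(g\omega^{n-k-1}).
\]
The degree parities work out because $f\omega^k$ and $g\omega^{n-k-1}$ have even degree. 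Concretely, $\int d(\bar\partial(f\omega^k)\wedge g\omega^{n-k-1})=0$ together with a type count gives $\int \partial\bar\partial(f\omega^k)\wedge g\omega^{n-k-1}=-\int\bar\partial(f\omega^k)\wedge\partial(g\omega^{n-k-1})$. Repeating the step gives $\int f\omega^k\wedge\bar\partial\partial(g\omega^{n-k-1})$, and since $\bar\partial\partial=-\partial\bar\partial$ the signs combine to the displayed formula. This is the same manipulation already used in the proof of Lemma~\ref{FL}.

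Finally, since even forms commute, $\omega^k\wedge\ii\partial\bar\partial(g\omega^{n-k-1})=\ii\partial\bar\partial(g\omega^{n-k-1})\wedge\omega^{k}$. Dividing by $\omega^n$ and multiplying by $n$ identifies this with $\tfrac1n f\,\mathcal L_{n-k-1}g\,\omega^n$, by the definition \eqref{eq:linear-operator} with $k$ replaced by $n-k-1$. The case $k=n-1$ is included, since $\mathcal L_0=\Delta$ by the remark after \eqref{110}. This proves $\int_X g\mathcal L_kf\,\omega^n=\int_X f\mathcal L_{n-k-1}g\,\omega^n$.

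\textbf{Main obstacle.} The identity itself is purely formal once compact support removes the boundary terms. The only delicate point is that boundedness is measured on the noncompact $X_\reg$. This is exactly where the local ambient extension from Definition~\ref{Herm} is needed, rather than compactness of $X_\reg$, together with the observation that pullback by the isometric immersion $j$ does not increase tensor norms. Bookkeeping of signs in the double integration by parts is routine and can be checked against Lemma~\ref{FL}.
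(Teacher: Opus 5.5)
Your proposal follows the paper's proof essentially step for step. It uses a finite cover by relatively compact local embeddings on which $\omega$ is the isometric restriction of a smooth ambient form, so that pulled-back ambient derivatives bound $B_k$ and $\varphi_k$ (the paper pulls back $\partial\Omega^k$, $\bar\partial\Omega^k$, $\partial\bar\partial\Omega^k$ directly rather than expanding by the product rule), and then applies Stokes' theorem twice to compactly supported forms on $X_\reg$.

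One sign slip needs fixing. Since $\bar\partial(f\omega^k)$ has odd degree, the first integration by parts gives $\int\partial\bar\partial(f\omega^k)\wedge g\omega^{n-k-1}=+\int\bar\partial(f\omega^k)\wedge\partial(g\omega^{n-k-1})$. The second gives $-\int f\omega^k\wedge\bar\partial\partial(g\omega^{n-k-1})$, and this minus is what cancels against $\bar\partial\partial=-\partial\bar\partial$. Followed literally, your chain would give the opposite overall sign, although your final displayed identity is correct.
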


\begin{proof}
By the definition of complex spaces and compactness, we can choose a finite cover $\{U_\alpha\}_{\alpha\in I}$ of $X$ by local embeddings $\tau_\alpha:U_\alpha \to V_\alpha\subset \mathbb{C}^{N}$. By means of local embeddings $\tau_\alpha$, $\omega$ can be seen as restriction of a smooth Hermitian metric $\Omega_\alpha$ on the ambient space $V_\alpha$. On the regular locus, the embedding is isometric from $\omega$ to $\Omega_\alpha$, and exterior differentiation commutes with restriction. The restrictions of $\partial\Omega_\alpha^k$, $\bar\partial\Omega_\alpha^k$, and $\partial\bar\partial\Omega_\alpha^k$ therefore have uniformly bounded norms on the part of the chart under consideration. Their contractions with $\omega$, as in \eqref{110}, give the required bounds for $B_k$ and $\varphi_k$.

If one of $f,g$ has compact support, Stokes' theorem gives
\[
\begin{aligned}
\int_X g\mathcal L_k f\,\omega^n
&=n\int_X g\omega^{n-k-1}\wedge\ii\partial\bar\partial(f\omega^k)\\
&=n\int_X f\omega^k\wedge\ii\partial\bar\partial(g\omega^{n-k-1})\\
&=\int_X f\mathcal L_{n-k-1}g\,\omega^n.
\end{aligned}
\]
\end{proof}

 Let $\eta_m$ be the sequence from Lemma~\ref{cut}. 
It follows that
\begin{equation}\label{eq:cutoff-norms}
\|\Delta\eta_m\|_{L^1}\longrightarrow0,\qquad \|\mathrm{d}\eta_m\|_{L^2}\longrightarrow0.
\end{equation}
In the following estimates, all norms are taken with respect to $\omega$ and $\omega^n$.

\begin{lemma}\label{cutoff1}
Fix $1\le k\le n-1$, and set $\ell=n-k-1$. Suppose that $v_k\in C^\infty(X_{\reg})\cap L^\infty(X_{\reg})$ and satisfies $\mathcal L_k v_k=\lambda_k v_k$ for some $\lambda_k\in\R$. Then  
\[ \mathrm{d}v_k\in L^2(X_{\reg}),\qquad\|(\mathcal L_k-\lambda_k)(\eta_m v_k)\|_{L^1}\to0,\qquad m\to+\infty.  \]
\end{lemma}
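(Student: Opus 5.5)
We prove the two assertions in order: finite energy first, then the cutoff estimate, which uses the energy bound. Write $v=v_k$ and $\lambda=\lambda_k$. Throughout we use two facts. By Lemma~\ref{lem:formal-adjoint}, $B_k$ and $\varphi_k$ are bounded on $X_\reg$. Also $\int_X\omega^n<\infty$, so bounded functions lie in every $L^p(X_\reg)$.

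\emph{Finite energy.} We test the equation against $\eta_m^2 v$, which has compact support in $X_\reg$. Using \eqref{110},
\[
\int_X \eta_m^2 v\,\Delta v\,\omega^n=\int_X\eta_m^2 v\bigl(\lambda v-\langle B_k,\mathrm{d}v\rangle-\varphi_k v\bigr)\omega^n .
\]
We integrate the left side by parts via $n\ii\partial\bar\partial v\wedge\omega^{n-1}$. This produces $-\int\eta_m^2|\partial v|^2\omega^n$ plus two kinds of terms. The first are cross terms $\int \eta_m v\,\partial\eta_m\cdot\bar\partial v$. The second are torsion terms $\int\eta_m^2 v\,\bar\partial v\wedge\partial\omega^{n-1}$, in which $\partial\omega^{n-1}$ is bounded by the same ambient-extension argument as in Lemma~\ref{lem:formal-adjoint}. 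Each term involving $\mathrm{d}v$ is of the form $\int \eta_m |v|\,|\mathrm{d}v|\,(\eta_m C+|\mathrm{d}\eta_m|)$. By Cauchy--Schwarz with a small parameter, it is at most $\tfrac12\int\eta_m^2|\partial v|^2+C\|v\|_\infty^2(V_\omega+\|\mathrm{d}\eta_m\|_{L^2}^2)$. The remaining terms are bounded by $(|\lambda|+\sup|\varphi_k|)\|v\|_\infty^2V_\omega$. Absorbing the small term gives
\[
\int\eta_m^2|\partial v|^2\omega^n\le C\|v\|_\infty^2\bigl(1+\|\mathrm{d}\eta_m\|_{L^2}^2\bigr),
\]
and the right side is uniformly bounded by \eqref{eq:cutoff-norms}. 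Since $\eta_m$ increases to $1$ on $X_\reg$, monotone convergence gives $\mathrm{d}v\in L^2(X_\reg)$.

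\emph{Cutoff estimate.} Because $(\mathcal L_k-\lambda)v=0$, the product rule gives
\[
(\mathcal L_k-\lambda)(\eta_m v)=v\,\Delta\eta_m+2\,\mathrm{Re}\,\langle\partial\eta_m,\partial v\rangle+v\,\langle B_k,\mathrm{d}\eta_m\rangle ,
\]
with the cross term written using the convention \eqref{eq:norm-convention}. We bound the three terms in $L^1$ separately:
\[
\|v\Delta\eta_m\|_{L^1}\le\|v\|_\infty\|\Delta\eta_m\|_{L^1},\qquad
\|v\langle B_k,\mathrm{d}\eta_m\rangle\|_{L^1}\le C\|v\|_\infty V_\omega^{1/2}\|\mathrm{d}\eta_m\|_{L^2},
\]
\[
\|\langle\partial\eta_m,\partial v\rangle\|_{L^1}\le\|\mathrm{d}\eta_m\|_{L^2}\|\mathrm{d}v\|_{L^2}.
\]
By \eqref{eq:cutoff-norms}, together with the finite energy proved in the first step, all three terms tend to $0$.

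\emph{Main obstacle.} The delicate step is the energy bound. The torsion term produced by integrating $\Delta v$ by parts against a non-closed $\omega^{n-1}$ must be bounded uniformly up to $X_{\mathrm{sing}}$. The first-order drift $\langle B_k,\mathrm{d}v\rangle$ must be absorbed rather than integrated by parts, since nothing bounds derivatives of $B_k$ near the singular locus. This is why we keep all first-order terms in the form $|v||\mathrm{d}v|$ and use only sup bounds on coefficients, which are available from the local ambient extensions.
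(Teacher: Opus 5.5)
Your proof is correct. Your cutoff estimate is the same three-term bound the paper uses, so the difference is only in how you get finite energy.

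The paper does not test the equation against $\eta_m^2 v_k$. It starts from the pointwise identity $\mathcal L_k(v_k^2)=2v_k\mathcal L_kv_k-\varphi_kv_k^2+2|\partial v_k|^2$, multiplies by $\eta_m$, and moves the whole operator onto the cutoff using $\mathcal L_k^*=\mathcal L_{n-k-1}$ from Lemma~\ref{lem:formal-adjoint}. This gives the exact formula $2\int_X\eta_m|\partial v_k|^2\omega^n=\int_X v_k^2\,\mathcal L_\ell\eta_m\,\omega^n+\int_X\eta_m(\varphi_k-2\lambda_k)v_k^2\,\omega^n$. Its right side is bounded using $\|\Delta\eta_m\|_{L^1}$, $\|\mathrm{d}\eta_m\|_{L^2}$ and sup bounds on $B_\ell,\varphi_\ell,\varphi_k$, and Fatou's lemma finishes. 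No torsion term and no absorption step appear.

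Your Caccioppoli argument costs a little more in two places. You need a separate pointwise bound on $\partial\omega^{n-1}$; as you say, it comes from the ambient extension, and it is essentially the boundedness of $B_{n-1}$. You also need the absorption step.

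In exchange, your energy step is more robust. It uses only $\sup_m\|\mathrm{d}\eta_m\|_{L^2}<\infty$, not the second-order cutoff property. It never uses the special adjoint structure, so it works for any operator $\Delta+\langle b,\mathrm{d}\,\cdot\rangle+c$ with $b,c$ bounded on $X_{\reg}$.

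The paper's route is shorter, and it reuses the duality mechanism that later drives \eqref{cutoff} and the uniqueness argument. In both versions, $\|\Delta\eta_m\|_{L^1}\to0$ is genuinely needed only for the second assertion.
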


\begin{proof}
By the product rule,
\[
\mathcal L_k(v_k^2)=2v_k\mathcal L_k v_k-\varphi_k v_k^2+2|\partial v_k|^2=(2\lambda_k-\varphi_k)v_k^2+2|\partial v_k|^2.
\]
Multiply this identity by $\eta_m$ and use Lemma~\ref{lem:formal-adjoint}. Since $\eta_m$ has compact support,
\[
\begin{aligned}
2\int_X\eta_m|\partial v_k|^2\,\omega^n
&=\int_Xv_k^2\mathcal L_\ell\eta_m\,\omega^{n}-\int_X\eta_m(2\lambda_k-\varphi_k)v_k^2\,\omega^n\\
&=\int_Xv_k^2\Delta\eta_m\,\omega^n+\int_Xv_k^2\langle B_\ell,\mathrm{d}\eta_m\rangle\,\omega^n\\
&\quad+\int_X\eta_m(\varphi_k+\varphi_\ell-2\lambda_k)v_k^2\,\omega^n.
\end{aligned}
\]
The right-hand side is uniformly bounded. Indeed, $v_k$, $B_k$, $\varphi_k$, and $\varphi_\ell$ are bounded, and \eqref{eq:cutoff-norms} controls the first two terms. Fatou's lemma therefore gives $\int_X|\partial v_k|^2\,\omega^n<\infty$. By \eqref{eq:norm-convention}, this is equivalent to $\mathrm{d}v_k\in L^2(X)$.

Using $\mathcal L_k v_k=\lambda_kv_k$ once more, we obtain
\[
(\mathcal L_k-\lambda_k)(\eta_m v_k)=v_k\Delta\eta_m+\langle \mathrm{d}\eta_m,\mathrm{d}v_k\rangle+v_k\langle B_k,\mathrm{d}\eta_m\rangle.
\]
Consequently,
\[
\begin{aligned}
\|(\mathcal L_k-\lambda_k)(\eta_m v_k)\|_{L^1}
&\le\|v_k\|_{L^\infty}\|\Delta\eta_m\|_{L^1}+\|\mathrm{d}\eta_m\|_{L^2}\|\mathrm{d}v_k\|_{L^2}\\
&\quad+\|v_k\|_{L^\infty}\|B_k\|_{L^\infty}V_{\omega}^{1/2}\|\mathrm{d}\eta_m\|_{L^2}
\longrightarrow0,\qquad m\to +\infty.
\end{aligned}
\]
\end{proof}

For such a function $v_k$ and any $F\in C^\infty(X)\cap L^\infty(X)$, compactly supported integration by parts gives
\begin{equation}\label{cutoff}
\int_X\eta_m v_k(\mathcal L_\ell-\lambda_k)(F)\,\omega^n=\int_X F (\mathcal L_k-\lambda_k)(\eta_m v_k)\omega^n\longrightarrow0.
\end{equation}
Each integral on the left is over a compact subset of $X$. Thus this identity does not require any global integrability assumption on the derivatives of $F$.

\begin{lemma}\label{200}
    Fix $1\le k\le n-1$, and set $\ell=n-k-1$.  Suppose that $v_k, v_\ell\in C^\infty(X_{\reg})\cap L^\infty(X_{\reg})$, $v_k, v_\ell >0$ and satisfies 
    \[\mathcal L_k v_k=\lambda_k v_k,\qquad \mathcal L_\ell v_\ell=\lambda_\ell v_\ell \]
    for some $\lambda_k,\lambda_\ell\in\R$. Then
    \[ \lambda_k=\lambda_\ell.\]
\end{lemma}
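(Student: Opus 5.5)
The plan is to pair the two eigenfunctions against each other with cutoffs and use the adjoint relation of Lemma~\ref{lem:formal-adjoint}.

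Apply \eqref{cutoff} with $F=v_\ell$, which is smooth and bounded on $X_{\reg}$. This gives
\[
\int_X\eta_m v_k(\mathcal L_\ell-\lambda_k)(v_\ell)\,\omega^n\longrightarrow0 .
\]
Since $\mathcal L_\ell v_\ell=\lambda_\ell v_\ell$, the integrand equals $(\lambda_\ell-\lambda_k)\eta_m v_kv_\ell$. Hence
\[
(\lambda_\ell-\lambda_k)\int_X\eta_m v_kv_\ell\,\omega^n\longrightarrow0 .
\]
By Lemma~\ref{cut}, $\eta_m$ increases to the characteristic function of $X_{\reg}$. Moreover, $v_kv_\ell$ is bounded, and the volume $\int_{X_{\reg}}\omega^n$ is finite. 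Monotone (or dominated) convergence therefore gives
\[
\int_X\eta_m v_kv_\ell\,\omega^n\to\int_{X_{\reg}}v_kv_\ell\,\omega^n .
\]
This limit is strictly positive, because $v_k,v_\ell>0$ on $X_{\reg}$ and $X_{\reg}$ is a nonempty open set of positive measure. It follows that $\lambda_\ell-\lambda_k=0$.

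Most of the work is already contained in Lemma~\ref{cutoff1} and identity \eqref{cutoff}, so the remaining points are small checks. First, \eqref{cutoff} requires $F$ to be smooth and bounded. This holds for $F=v_\ell$ by hypothesis, and no integrability of $dv_\ell$ is needed, since the integration by parts is against the compactly supported function $\eta_m v_k$. Second, the formal adjoint pairing must be between $\mathcal L_k$ and $\mathcal L_\ell$ with $\ell=n-k-1$. When $k=n-1$ we get $\ell=0$ and $\mathcal L_0=\Delta$, and Lemma~\ref{lem:formal-adjoint} still applies, since $\mathcal L_0$ is defined in \eqref{eq:linear-operator}. The main point to verify is therefore the hypothesis of Lemma~\ref{cutoff1} for $v_k$, namely boundedness together with the eigen-equation, which gives $\|(\mathcal L_k-\lambda_k)(\eta_mv_k)\|_{L^1}\to0$. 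Pairing this with the bounded function $v_\ell$ is exactly what makes the right-hand side of \eqref{cutoff} tend to zero.
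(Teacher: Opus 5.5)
Your proposal is correct and is essentially the paper's own proof. You take $F=v_\ell$ in \eqref{cutoff} to get $(\lambda_\ell-\lambda_k)\int_X\eta_m v_kv_\ell\,\omega^n\to0$, and the convergence of $\int_X\eta_m v_kv_\ell\,\omega^n$ to the strictly positive number $\int_{X_{\reg}}v_kv_\ell\,\omega^n$ then forces $\lambda_k=\lambda_\ell$; your side checks (only boundedness and smoothness of $F=v_\ell$ are needed, and the case $\ell=0$ is covered by $\mathcal L_{n-1}^*=\mathcal L_0$) are accurate.
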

\begin{proof}
Choose $F=v_\ell$ in the equation \eqref{cutoff}, we have
    \begin{equation}
        \begin{split}
         (\lambda_\ell - \lambda_k) \int_X\eta_m v_kv_\ell\,\omega^n&= \int_X\eta_m v_k(\mathcal L_\ell-\lambda_k)(v_\ell)\,\omega^n\\&=\int_X v_\ell (\mathcal L_k-\lambda_k)(\eta_m v_k)\omega^n \to 0.
        \end{split}
    \end{equation}
    By dominated convergence, $\int_X\eta_m v_kv_\ell\,\omega^n\to \int_X v_kv_\ell\,\omega^n> 0$.
Hence, it follows that
\begin{equation}
    \lambda_\ell =\lambda_k
\end{equation}
\end{proof}

\begin{proposition}\label{uni}
    Under the assumptions of Theorem~\ref{MT}, let $u_1,u_2\in C^\infty(X_{\reg})\cap L^\infty(X_{\reg})$ be positive and suppose that
\begin{equation}\label{SGG1}
\ii\partial\bar\partial(u_i\omega^k)\wedge\omega^{n-k-1}=a_iu_i\omega^n,\qquad i=1,2,\qquad 1\le k\le n-1
\end{equation}
for constants $a_1,a_2$. Then $a_1=a_2$ and $u_2=A u_1$ for some $A>0$.
\end{proposition}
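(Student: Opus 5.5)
The plan has two steps. First I pair both equations with a positive bounded solution of the adjoint equation to get $a_1=a_2$. Then I run a weighted energy argument on the ratio of the two solutions, with cutoffs. Throughout, $\ell=n-k-1$, $\lambda=na_1$, and we use $\mathcal L_k^*=\mathcal L_\ell$ from Lemma~\ref{lem:formal-adjoint}.

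\textbf{Step 1: the constants agree.} I need a positive bounded smooth $v_\ell$ on $X_{\reg}$ with $\mathcal L_\ell v_\ell=\mu v_\ell$ for some $\mu\in\R$.
\begin{itemize}
\item If $1\le \ell\le n-1$ (that is, $k\le n-2$), the existence part of Theorem~\ref{MT}, already proved in Section~\ref{ext}, applied with $k$ replaced by $\ell$, gives such a $v_\ell$. It is bounded above and bounded away from zero.
\item If $k=n-1$, then $\ell=0$ and $\mathcal L_0=\Delta$, so I take $v_0\equiv1$ and $\mu=0$.
\end{itemize}
Lemma~\ref{200} (for $\ell=0$, the same computation with $\mathcal L_0=\Delta$ and \eqref{eq:cutoff-norms}) applied to the pair $(u_i,v_\ell)$ gives $na_i=\mu$ for $i=1,2$. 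Hence $a_1=a_2$ and $\lambda=\mu$.

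\textbf{Step 2: reduce to a ratio.} Neither $u_i$ is assumed to be bounded away from zero, so I do not divide by $u_1$. Instead I set
\[
u=u_1+u_2,\qquad h=\frac{u_2}{u}\in(0,1).
\]
Then $u$ is a positive bounded solution of $(\mathcal L_k-\lambda)u=0$. Consider the conjugated operator $Ph=u^{-1}(\mathcal L_k-\lambda)(uh)$. Since $(\mathcal L_k-\lambda)u=0$, $P$ has no zeroth-order term, and its principal part is $\Delta$. So $P(h^2)=2hPh+2|\partial h|^2$ (with the convention \eqref{eq:norm-convention}). Because $Ph=u^{-1}(\mathcal L_k-\lambda)u_2=0$, this gives
\[
(\mathcal L_k-\lambda)(uh^2)=2u|\partial h|^2\ \ge 0 .
\]
The function $uh^2=u_2^2/(u_1+u_2)\le u_2$ is bounded, which is exactly why I work with $u_1+u_2$ rather than $u_1$.

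\textbf{Step 3: energy identity with cutoffs.} Multiply the last identity by $\eta_m v_\ell$ and integrate. Since $\eta_m$ is compactly supported in $X_{\reg}$, Lemma~\ref{lem:formal-adjoint} applies and gives
\[
2\int_X\eta_m v_\ell\, u\,|\partial h|^2\,\omega^n=\int_X uh^2\,(\mathcal L_\ell-\lambda)(\eta_m v_\ell)\,\omega^n .
\]
Lemma~\ref{cutoff1}, with the roles of $k$ and $\ell$ swapped, says $\|(\mathcal L_\ell-\lambda)(\eta_m v_\ell)\|_{L^1}\to0$. For $\ell=0$ this reduces to $\|\Delta\eta_m\|_{L^1}\to0$. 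Since $uh^2$ is bounded, the right side tends to $0$. The integrand on the left is nonnegative and increases to $v_\ell u|\partial h|^2$, so by monotone convergence
\[
\int_X v_\ell\,u\,|\partial h|^2\,\omega^n=0 .
\]
As $v_\ell u>0$, this forces $\partial h=0$, and since $h$ is real, $dh=0$. $X_{\reg}$ is connected because $X$ is irreducible, so $h\equiv c\in(0,1)$. Therefore $u_2=c(u_1+u_2)$, that is, $u_2=\frac{c}{1-c}u_1$.

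\textbf{Main obstacle.} The delicate point is justifying the integration by parts near $X_{\sing}$ without any lower bound or gradient bound on $u_1$ and $u_2$. Using the bounded ratio $h$ with the bounded weight $u_1+u_2$, and putting all derivatives of the cutoff onto the adjoint side $\eta_m v_\ell$, where Lemma~\ref{cutoff1} controls them, is what makes Step 3 work. The remaining ingredients are checking $P(h^2)=2hPh+2|\partial h|^2$ against the definition \eqref{110}, which is routine, and the $\ell=0$ case of Lemmas~\ref{200} and~\ref{cutoff1}, which follows directly from \eqref{eq:cutoff-norms}.
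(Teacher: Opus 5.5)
Your proof is correct and follows the paper's argument. Step 1 is exactly the paper's use of Lemma~\ref{200} against the adjoint solution $v_\ell$ (with $v_0\equiv 1$ when $k=n-1$). Steps 2--3 are the ratio identity \eqref{eq:ratio-energy}, paired with $\eta_m v_\ell$ and controlled by Lemma~\ref{cutoff1}.

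The only difference is the reference solution. The paper divides by the auxiliary solution $v_k\ge 1$ from the existence part. This requires also knowing $\lambda_k=\lambda_\ell$ and uses $\inf v_k=1$ to make $u_1^2/v_k$ bounded; the paper then shows each $u_i$ is a multiple of $v_k$. Your choice $u_1+u_2$ makes $u_2^2/(u_1+u_2)\le u_2$ bounded automatically, so you only need the existence of the adjoint solution $v_\ell$.
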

\begin{proof}
 Equation \eqref{SGG1} can be written as
\begin{equation}\label{SGG2}
\mathcal L_{k}u_i=na_i u_i,\qquad i=1,2,
\end{equation}


For $1\le j \le n-1$, the  existence argument in Section \eqref{ext} gives  bounded smooth positive functions $v_j$ satisfying 
\begin{equation}\label{f_k-Gau}
   \mathcal L_j v_j=\lambda_j v_j,\;\;\;\;\inf v_{j}=1
\end{equation} 
 for some $\lam_j\in\R$.  Set $\ell=n-k-1$, if  $k=n-1$, then $\ell=0$. Let $\mathcal L_0=\Delta$,  then $v_0=1$ and $\lam_0=0$ satisfy the above equation. By Lemma \ref{200}, we have
     \[ \lam_\ell=na_1=na_2 \]
Hence, $a_1=a_2$. This proves uniqueness of the constant. 

To compare the functions, set $r=u_1/v_k$. Hence, $v_kr=u_1$. Since $\lam_k=\lam_\ell$,  both $u_1$ and $v_k$ solve the equation with eigenvalue $\lam_l$, the product rule gives
\[
0=(\mathcal L_k-\lam_\ell)(v_kr)=v_k\Delta r+v_k\langle B_k,\mathrm{d}r\rangle+\langle \mathrm{d}v_k,\mathrm{d}r\rangle.
\]
Expanding $(\mathcal L_k-\lam_\ell)(v_kr^2)$ and using this equality yields
\[
(\mathcal L_k-\lam_\ell)(v_kr^2)=2r\bigl(v_k\Delta r+v_k\langle B_k,\mathrm{d}r\rangle+\langle \mathrm{d}v_k,\mathrm{d}r\rangle\bigr)+2v_k|\partial r|^2.
\]
Therefore
\begin{equation}\label{eq:ratio-energy}
(\mathcal L_k-\lam_\ell)\left(\frac{u_1^2}{v_k}\right)=2v_k\left|\partial\left(\frac{u_1}{v_k}\right)\right|^2.
\end{equation}
Apply \eqref{cutoff} to $F=u_1^2/v_k$. This gives
\begin{equation}
    \begin{split}
        2\int_X\eta_m v_kv_\ell\left|\partial\left(\frac {u_1}{v_k}\right)\right|^2\,\omega^n&=\int_X\eta_m v_\ell(\mathcal L_k-\lam_\ell)\left(\frac{u_1^2}{v_k}\right)\omega^n\\&
        = \int_X \frac{u_1^2}{v_k}(\mathcal L_l-\lam_\ell)(\eta_m v_\ell)\omega^n
    \end{split}
\end{equation}
Since the infimum of $v_k$ is 1 and $u_1$ is bounded, then $\frac{u_1^2}{v_k}$ is bounded. The integrands on the left are nonnegative. By Lemma \ref{cutoff1} and Fatou's lemma, we have
\[
\int_X v_kv_\ell\left|\partial\left(\frac {u_1}{v_k}\right)\right|^2\,\omega^n=0.
\]
Since $v_k,v_\ell>0$, we have $\mathrm{d}\left(\frac {u_1}{v_k}\right)=0$. The regular locus of an irreducible complex space is connected, so $u_1=A_1 v_k$ for some $A_1>0$. The same argument above gives $u_2=A_2 v_k$ for some $A_2>0$. Set $A=\frac{A_1}{A_2}$. Thus   $u_1=A u_2$.
\end{proof}

Proposition~\ref{uni} completes the proof of Theorem~\ref{MT}. It also shows that every bounded smooth positive solution is bounded away from zero. Either normalization
\[
\inf_{X_{\mathrm{reg}}}\rho=1\qquad\text{or}\qquad\int_{X_{\mathrm{reg}}}\rho\omega^n=1
\]
determines a unique solution. For $k=n-1$, the uniqueness of bounded Gauduchon metrics was proved in \cite[Theorem B]{Pan1}.

By Lemma \eqref{200}, we observe that 
\begin{corollary}\label{cor:adjoint-symmetry}
Let $(X,\omega)$ satisfy the assumptions of Theorem~\ref{MT}, then
\begin{equation}\label{eq:adjoint-symmetry}
c_k(\omega)=c_{n-k-1}(\omega),\qquad 1\le k\le n-2.
\end{equation}
\end{corollary}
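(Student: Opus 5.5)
The plan is to apply Lemma~\ref{200} to the two bounded positive solutions produced by Theorem~\ref{MT}, one for the index $k$ and one for the complementary index $\ell=n-k-1$. Fix $1\le k\le n-2$, so that $1\le \ell\le n-2$ as well and both indices lie in the range covered by Theorem~\ref{MT}.

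\emph{Step 1.} By the existence part of Theorem~\ref{MT}, there are positive functions $\rho_k,\rho_\ell\in C^\infty(X_{\reg})\cap L^\infty(X_{\reg})$ with
\[
\ii\partial\bar\partial(\rho_k\omega^k)\wedge\omega^{n-k-1}=c_k(\omega)\rho_k\omega^n,\qquad \ii\partial\bar\partial(\rho_\ell\omega^\ell)\wedge\omega^{n-\ell-1}=c_\ell(\omega)\rho_\ell\omega^n.
\]
The constants $c_k(\omega)$ and $c_\ell(\omega)$ are well defined by the uniqueness part of the theorem, that is, by Proposition~\ref{uni}.

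\emph{Step 2.} In the notation of \eqref{eq:linear-operator}, these equations read $\mathcal L_k\rho_k=nc_k(\omega)\rho_k$ and $\mathcal L_\ell\rho_\ell=nc_\ell(\omega)\rho_\ell$. So $v_k=\rho_k$ and $v_\ell=\rho_\ell$ satisfy the hypotheses of Lemma~\ref{200}, with $\lambda_k=nc_k(\omega)$ and $\lambda_\ell=nc_\ell(\omega)$. The lemma gives $\lambda_k=\lambda_\ell$, hence $c_k(\omega)=c_{n-k-1}(\omega)$.

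\emph{Where the content lies.} There is essentially no obstacle remaining; everything is contained in Lemma~\ref{200}. That lemma rests on two ingredients. The first is the adjoint relation $\mathcal L_k^*=\mathcal L_{n-k-1}$ from Lemma~\ref{lem:formal-adjoint}. The second is the vanishing of the boundary terms near $X_{\mathrm{sing}}$, which follows from the finite-energy and cutoff estimate of Lemma~\ref{cutoff1}. The only points to check are that the indices are in the admissible range, which is why $k\le n-2$ is imposed, and that $\int_X\rho_k\rho_\ell\,\omega^n>0$, which holds because both functions are positive.
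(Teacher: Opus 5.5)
Your proposal is correct and is the paper's argument: the paper obtains the corollary directly from Lemma~\ref{200}, applied to the bounded positive solutions from Theorem~\ref{MT} for the indices $k$ and $n-k-1$, with $\lambda_k=nc_k(\omega)$ and $\lambda_\ell=nc_{n-k-1}(\omega)$. Your account of where the real content lies, namely the adjoint relation of Lemma~\ref{lem:formal-adjoint} and the cutoff estimate of Lemma~\ref{cutoff1}, also matches the paper.
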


\begin{remark}
    All the arguments in this section also hold on compact complex manifolds.
\end{remark}

\section{Proof of Theorem~\ref{thm2}}\label{blowups}

We first establish the uniform inequalities for blow-ups of smooth manifolds. We then prove the removable singularities statement and pass to the limit on the base.

The following extension lemma for solution of \eqref{ellip} is important for us to prove Theorem \ref{thm2}. 

\begin{proposition}\label{exten}
Let $M$ be a complex manifold of dimension $n\ge2$, let $\omega$ be a smooth Hermitian metric on $M$, and let $Z\subset M$ be a closed analytic subset with $\codim_\C Z\ge2$. Fix $1\le k\le n-1$. Suppose that $u\in C^\infty(M\setminus Z)\cap L^\infty_{\mathrm{loc}}(M)$ satisfies
\begin{equation}\label{ellip}
\ii\partial\bar\partial(u\omega^k)\wedge\omega^{n-k-1}=c_ku\omega^n\quad\text{on }M\setminus Z
\end{equation}
for a real constant $c_k$. Then $u$ extends uniquely to a smooth solution on $M$. Bounds $a\le u\le b$ on $M\setminus Z$ are preserved by the extension.
\end{proposition}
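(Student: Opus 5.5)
The proof is local, so I would work in a small coordinate ball $U\Subset M$ around a point of $Z$. I will write the equation as $(\mathcal L_{k,\omega}-nc_k)u=0$, using the operator $\mathcal L_{k,\omega}$ from \eqref{110}. This operator is uniformly elliptic on $U$ with smooth bounded coefficients, since $\omega$ is smooth on all of $M$. The plan is to show that $u$ is a weak ($W^{1,2}_{\mathrm{loc}}$) solution across $Z$, and then to invoke standard elliptic regularity. Uniqueness of the extension is immediate because $Z$ is nowhere dense. The bounds $a\le u\le b$ pass to the extension by continuity.

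\textbf{Step 1 (cut-offs).} Since $\codim_\C Z\ge2$, the real codimension of $Z$ is at least $4$. Covering $Z\cap\bar U$ by $O(r^{-(2n-4)})$ balls of radius $r$ gives cut-off functions $\chi_r$ with the following properties: $\chi_r=1$ away from an $r$-neighborhood of $Z$, $\chi_r=0$ near $Z$, $|d\chi_r|\le C/r$, and $|\nabla^2\chi_r|\le C/r^2$. The estimates are
\[
\|d\chi_r\|_{L^2(U)}^2\le Cr^{2n-(2n-4)}r^{-2}=Cr^2\to0,\qquad \|\nabla^2\chi_r\|_{L^1(U)}\le Cr^{2}\to0 .
\]
The Minkowski content of a compact piece of an analytic set is finite, so the covering count is legitimate. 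Alternatively, I could use the same cut-off construction as Lemma~\ref{cut}, applied on the manifold $M$.

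\textbf{Step 2 (energy bound and weak equation).} I would repeat the argument of Lemma~\ref{cutoff1} locally. Multiply the identity $\mathcal L_k(u^2)=(2nc_k-\varphi_k)u^2+2|\partial u|^2$ by $\psi\chi_r$, where $\psi\in C^\infty_c(U)$ is fixed, and integrate by parts using Lemma~\ref{lem:formal-adjoint}. The boundedness of $u$ and Step 1 then give $\int\psi|\partial u|^2\omega^n<\infty$, so $u\in W^{1,2}_{\mathrm{loc}}(M)$, the derivative being taken in the distributional sense across $Z$ because $Z$ has zero capacity. Next, for any test function $\phi\in C_c^\infty(U)$, apply Lemma~\ref{lem:formal-adjoint} to $\chi_r\phi$:
\[
\int u(\mathcal L_{n-k-1}-nc_k)(\chi_r\phi)\,\omega^n=0.
\]
I expand this and let $r\to0$. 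The error terms are $u\phi\Delta\chi_r$, $u\langle d\chi_r,d\phi\rangle$ and $u\phi\langle B,d\chi_r\rangle$, and Step 1 shows they all tend to zero. The limit is $\int u(\mathcal L_{n-k-1}-nc_k)\phi\,\omega^n=0$, so $u$ is a distributional solution of $(\mathcal L_k-nc_k)u=0$ on all of $U$.

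\textbf{Step 3 (regularity).} A bounded distributional solution of a linear elliptic equation with smooth coefficients is smooth, by elliptic regularity for distributions (hypoellipticity), or by the $W^{1,2}$ weak theory followed by the $W^{2,p}$ and Schauder bootstrap used in Theorem~\ref{grade}. So $u$ agrees a.e. with a smooth function $\tilde u$ on $U$. This $\tilde u$ equals $u$ on $U\setminus Z$ and satisfies \eqref{ellip} everywhere by continuity. The main obstacle I expect is Step 1: making the cut-off estimates rigorous near singular points of $Z$ itself. I would handle this with a volume estimate for tubular neighborhoods, $\mathrm{vol}(\{d(\cdot,Z)<r\}\cap U)\le Cr^4$, which is valid for analytic sets of complex codimension $\ge2$ (for example by Lelong-number or Łojasiewicz-type bounds). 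Combined with a regularized distance cut-off, this gives exactly the decay rates needed above.
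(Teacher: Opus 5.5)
Your proposal is correct, and its core mechanism matches the paper's. You test the equation against $\chi\phi$ with $\chi$ a cutoff near $Z$, put all derivatives on the test function, and use only $\|u\|_{L^\infty}$ and a tube-volume bound to kill the error terms. You then conclude by elliptic regularity for the resulting distributional solution.

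The difference is how the singularities of $Z$ itself are handled.

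\textbf{The paper's route.} It never needs a tube estimate near singular points of $Z$. It first treats $Z_{\reg}$, where adapted coordinates $Z=\{z''=0\}$ give product cutoffs and the trivial bound $\mathrm{vol}\{|z''|<2\delta\}=O(\delta^{2r})$. It then repeats the argument on the lower-dimensional analytic set $Z_{\mathrm{sing}}$, and the process stops after finitely many steps.

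\textbf{Your route.} You treat all of $Z$ at once. This needs the nontrivial input $\mathrm{vol}(\{d(\cdot,Z)<r\}\cap U)=O(r^4)$. As you suggest, it follows from the Lelong-number lower bound $\mathrm{vol}(Z\cap B(x,s))\ge c\,s^{2p}$ and a packing argument, applied to each pure-dimensional part of $Z$. You also need a regularized distance to get $|d\chi_r|\le C/r$ and $|\nabla^2\chi_r|\le C/r^2$.

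\textbf{What each buys.} Your route avoids the induction. It actually shows more: any closed set whose $r$-tubes have volume $o(r^2)$ is removable for bounded solutions. The paper's route is more elementary.

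Your $W^{1,2}_{\mathrm{loc}}$ energy step is not needed for the distributional equation. The cross term $u\langle d\chi_r,d\phi\rangle$ is already controlled by $\|u\|_{L^\infty}\|d\phi\|_{L^\infty}\|d\chi_r\|_{L^1}$. The energy bound is only useful if you prefer the weak $W^{1,2}$ theory to hypoellipticity in Step 3.
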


\begin{proof}
Set $L=\mathcal L_{k,\omega}-nc_k$. It is uniformly elliptic on relatively compact coordinate charts and has smooth coefficients. We first suppose that $Z$ is smooth near a given point, of complex codimension $r\ge2$. Choose coordinates $(z',z'')$ with $Z=\{z''=0\}$ and $z''\in\C^r$. Let $\chi_\delta$ be a smooth function equal to one when $|z''|\le\delta$ and zero when $|z''|\ge2\delta$, obtained by scaling a fixed cutoff. For $j=0,1,2$, its derivatives satisfy $|D^j\chi_\delta|\le C_j\delta^{-j}$.

Fix a compactly supported test function $\psi$ in this chart, and let $L^*$ be the formal adjoint with respect to coordinate Lebesgue measure. Since $(1-\chi_\delta)\psi$ is supported away from $Z$, the equation gives
\[
\int uL^*\psi=\int uL^*(\chi_\delta\psi).
\]
On the fixed support of $\psi$, the tube $\{|z''|<2\delta\}$ has volume $O(\delta^{2r})$. The coefficients of $L^*$ and the derivatives of $\psi$ are bounded. Hence
\[
\left|\int uL^*(\chi_\delta\psi)\right|\le C\|u\|_{L^\infty}\bigl(\delta^{2r}+\delta^{2r-1}+\delta^{2r-2}\bigr)\longrightarrow0.
\]
Thus $Lu=0$ across $Z$ in the sense of distributions. Elliptic regularity gives a smooth extension. Its upper and lower bounds are preserved by continuity.

For a general analytic set, apply this argument at every point of its regular locus. The resulting local extensions agree on overlaps and remain locally bounded. The remaining set is the singular locus of the reduced analytic set $Z$, which is analytic and has smaller dimension. Repeat the argument. After finitely many steps, no set remains. The extension is unique because $M\setminus Z$ is dense.
\end{proof}

Fu-Wang-Wang proved that 
$c_{k}(\omega)$ depends smoothly on  Hermitian metric $\omega$ (see \cite[Proposition 9]{FWW}). The following Proposition shows that this argument also holds for a degenerate Hermitian form arising from a blow-up of a compact complex manifold.

\begin{proposition}\label{prop:blowup-convergence}
Let $f:X\to Y$ be the blow-up of a compact  complex manifold along a smooth center of complex codimension at least two. For every Hermitian metric $\omega$ on $Y$, there is a Hermitian metric $\theta$ on $X$ such that $\theta-f^*\omega$ is closed and
\begin{equation}\label{eq:blowup-eigenvalue-limit}
\lim_{\epsilon\to0}c_k(f^*\omega+\epsilon\theta)=c_k(\omega),\qquad 1\le k\le n-2.
\end{equation}
\end{proposition}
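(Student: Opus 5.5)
We will treat the blow-up exactly as the resolution in Section~\ref{ext}, with $Y$ playing the role of the singular space and $X$ that of the resolution. Let $E=f^{-1}(Z)$ be the exceptional divisor, where $Z$ is the center. As in Section~\ref{ext}, we choose a smooth Hermitian metric $h$ on $\mathcal O_X(-E)$ and a constant $K>0$ so that $\theta=f^*\omega+Kc_1(\mathcal O_X(-E),h)$ is positive. Then $\theta-f^*\omega$ is closed, and
\[
\omega_\epsilon:=f^*\omega+\epsilon\theta=(1+\epsilon)f^*\omega+\epsilon\kappa,\qquad d\kappa=0 .
\]
For each $\epsilon$, Theorem~\ref{FWWMT} gives $\rho_\epsilon$ and $c_{k,\epsilon}=c_k(\omega_\epsilon)$ with $\inf\rho_\epsilon=1$.

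\textbf{Uniform estimates.} The proof of Lemma~\ref{uniform2} uses only that $f^*\omega\le\omega_\epsilon$ and that $\omega$ is smooth on $Y$. Hence it gives a uniform constant $A$ in \eqref{1.501} and \eqref{1.701}, and Lemma~\ref{uniform} then gives $|c_{k,\epsilon}|\le A$. Theorem~\ref{UniSo} applies to $f$, since a blow-up of a manifold is a resolution of $Y$ and the theorem allows any fixed reference metric. It yields uniform Sobolev and Poincar\'e constants. The volumes of $\omega_\epsilon$ are pinched between $\int_Y\omega^n$ and $\int_X\omega_1^n$. Theorem~\ref{C0} therefore gives $1\le\rho_\epsilon\le C$ uniformly in $\epsilon$.

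\textbf{Passing to the limit.} Take any sequence $\epsilon_i\to0$. After passing to a subsequence, $c_{k,\epsilon_i}\to c$ for some $c$. By Theorem~\ref{grade} and a diagonal argument, $\rho_{\epsilon_i}\to\tilde\rho$ in $C^\infty_{\mathrm{loc}}(X\setminus E)$ with $1\le\tilde\rho\le C$, and $\tilde\rho$ solves the limit equation for $f^*\omega$ on $X\setminus E$. Pushing forward by the biholomorphism $X\setminus E\cong Y\setminus Z$ gives $u$ with
\[
\ii\partial\bar\partial(u\omega^k)\wedge\omega^{n-k-1}=c\,u\,\omega^n \quad\text{on } Y\setminus Z,\qquad 1\le u\le C .
\]
Since $\codim_\C Z\ge2$, Proposition~\ref{exten} extends $u$ to a smooth positive solution on all of $Y$ with the same constant $c$. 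The uniqueness of the constant in Theorem~\ref{FWWMT} (or Proposition~\ref{uni} on the manifold $Y$) then forces $c=c_k(\omega)$. The limit is therefore independent of the subsequence, so the whole family converges and \eqref{eq:blowup-eigenvalue-limit} holds.

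\textbf{Main obstacle.} The delicate point is making sure that Theorem~\ref{C0} really applies uniformly. Its hypotheses (Sobolev, Poincar\'e, $A$, $|c_k|$, volume) must all hold with the specific degenerating family $\omega_\epsilon$ above. In particular, we need that Pan's uniform Sobolev/Poincar\'e theorem is valid for $\omega_\epsilon=f^*\omega+\epsilon\theta$ with $\theta$ of this closed-perturbation form. We expect this to be fine: $\theta$ is a fixed Hermitian metric on $X$, which is exactly the setting of Theorem~\ref{UniSo}.

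A secondary check is that the lower bound $\tilde\rho\ge1$ survives the limit. It does, because $\rho_\epsilon\ge1$ pointwise. This bound is needed so that the extended $u$ is positive, which is what allows Theorem~\ref{FWWMT} uniqueness to identify the constant. The restriction $k\le n-2$ is not needed in this argument beyond matching the statement; for $k=n-1$ both constants vanish anyway.
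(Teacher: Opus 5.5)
Your proposal follows the paper's proof essentially step by step. It uses the same closed perturbation $\theta=f^*\omega+Kc_1(\mathcal O_X(-E),h)$ and gets uniform bounds from Lemma~\ref{uniform2}, Lemma~\ref{uniform} and Theorem~\ref{C0}. It then takes local smooth convergence off $E$, transfers to $Y\setminus Z$, extends by Proposition~\ref{exten}, identifies the constant via uniqueness in Theorem~\ref{FWWMT}, and uses the subsequence argument for the full limit. The one soft point you flag is that Pan's uniform Sobolev and Poincar\'e inequalities must hold for this blow-up family, even though $f$ is not an isomorphism over the entirely regular base; the paper asserts this equally informally (``following Pan's work, it is easy to see''), so your argument is at the same level of rigor.
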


\begin{proof}

Let $Z$ be the center and $E=f^{-1}(Z)$ the exceptional divisor. The map $f$ is an isomorphism away from the smooth center $Z$. One can then choose a smooth Hermitian metric $h$ on the line bundle $L = \mathcal{O}_{{X}}(-D)$ and a constant $K > 0$ such that the real $(1,1)$-form $\theta = \pi^*\omega + K c_1(L, h)$ defines a Hermitian metric on $X$ \cite[Lemma 2.2]{CMM}. For $\epsilon \in (0,1]$, consider the family of Hermitian metrics defined by $\omega_{\epsilon} = \pi^* \omega + \epsilon \theta$.

Let $c_{k,\epsilon}=c_k(\omega_\epsilon)$, and let $\rho_\epsilon$ be the corresponding solution normalized by $\min_X\rho_\epsilon=1$. The proof of Lemma~\ref{uniform2} applies without change: it uses only the ambient smoothness of the base metric, the inequality $f^*\omega\le\omega_\epsilon$, and the closed correction. Hence the top-degree form bounds are uniform. Equation \eqref{cst} gives $|c_{k,\epsilon}|\le A$. The volume has a positive lower bound and a finite upper bound. Following Pan' work, it is easy to see that uniform Sobolev inequalities and Poincare inequalities for the perturbation path of the pullback metric on the blow-up of Hermitian manifolds.  Therefore we have $1\le\rho_\epsilon\le C_0$ uniformly.

Take any sequence $\epsilon_j\to0$. By Theorem~\ref{grade}, there is a subsequence such that
\[
c_{k,\epsilon_j}\longrightarrow c_k',\qquad \rho_{\epsilon_j}\longrightarrow\rho\quad\text{in }C^\infty_{\mathrm{loc}}(X\setminus E),\qquad 1\le\rho\le C_0.
\]
Write $\alpha=f^*\omega$. The limit satisfies
\[
\ii\partial\bar\partial(\rho\alpha^k)\wedge\alpha^{n-k-1}=c_k'\rho\alpha^n\quad\text{on }X\setminus E.
\]
Transfer $\rho$ by the isomorphism $f:X\setminus E\to Y\setminus Z$. The resulting function $r$ satisfies
\[
\ii\partial\bar\partial(r\omega^k)\wedge\omega^{n-k-1}=c_k'r\omega^n\quad\text{on }Y\setminus Z,\qquad 1\le r\le C_0.
\]
Proposition~\ref{exten} applies on the smooth manifold $Y$ with its nondegenerate metric $\omega$. It extends $r$ to a smooth function on $Y$, still satisfying $r\ge1$ and the same equation. The uniqueness of the constant in Theorem~\ref{FWWMT} gives $c_k'=c_k(\omega)$.

Every sequence $\epsilon_j\to0$ therefore has a subsequence whose constants converge to $c_k(\omega)$. Since the constants are uniformly bounded, this proves the full limit \eqref{eq:blowup-eigenvalue-limit}.
\end{proof}

\begin{proof}[Proof of Theorem~\ref{thm2}]
It is enough to consider one blow-up $f:X\to Y$. A blow-up along a smooth divisor is an isomorphism, so only centers of complex codimension at least two need to be considered. If a smooth center has components of different codimensions, the argument applies to its disjoint components, and divisorial components can be omitted.

 if $1\in C_k(Y)$, then there exists a Hermitian metric $\omega^+$ on $Y$ with $c_k(\omega^+)>0$. By Proposition~\ref{prop:blowup-convergence}, we have a Hermitian metric $\Omega^+$ on $X$ with $c_k(\Omega^+)>0$. The same arguments give us that  $-1\in C_k(Y)$ implies  $-1\in C_k(X)$.

Assume $C_k(Y)=\{-1,0,1\}$. 
For $k=n-1$, one always has $C_{n-1}(Y)=\{0\}$, so $1\le k \le n-2$.
Choose Hermitian metrics $\omega^+$ and $\omega^-$ on $Y$ with $c_k(\omega^+)>0$ and $c_k(\omega^-)<0$. Proposition~\ref{prop:blowup-convergence} gives Hermitian metrics $\Omega^+$ and $\Omega^-$ on $X$ with
\[
c_k(\Omega^+)>0,\qquad c_k(\Omega^-)<0.
\]
The convex path $\Omega_t=(1-t)\Omega^-+t\Omega^+$ consists of Hermitian metrics. By the continuity of $c_k$ from \cite[Proposition 9]{FWW}, there is $t_0\in(0,1)$ for which $c_k(\Omega_{t_0})=0$. Therefore $C_k(X)=\{-1,0,1\}$. Repeating the argument proves the result for a finite sequence of blow-ups with smooth centers.
\end{proof}

\section*{Acknowledgements}
This work was supported by 
the National Natural Science Foundation of China (No.12141104, 12371062 and 12431004) and the Natural
Science Foundation of Jiangsu Province (No.BK20251420).

\end{document}